\pdfoutput=1
\documentclass[reqno,11pt]{amsart}
\numberwithin{equation}{section}

\usepackage[tt=false]{libertine}

\usepackage{amsmath}
\usepackage{amssymb}
\usepackage{mathrsfs}
\usepackage[varbb]{newpxmath}

\let\savedbigtimes\bigtimes
\let\bigtimes\relax
\usepackage{mathabx} 
\let\bigtimes\savedbigtimes

\usepackage{amsthm}
\newtheorem{thm}{Theorem}
\newtheorem{ppn}[thm]{Proposition}
\newtheorem{lem}[thm]{Lemma}
\newtheorem{cor}[thm]{Corollary}
\theoremstyle{dfn}
\newtheorem{dfn}{Definition}
\usepackage[margin=1in]{geometry}

\usepackage{bm}

\usepackage[usenames,dvipsnames]{xcolor}
\usepackage[colorlinks=true,  
	linkcolor=NavyBlue,
	citecolor=PineGreen,
	urlcolor=RedViolet]{hyperref}
\hypersetup{bookmarksopen=true}

\newcommand{\beq}{\begin{equation}}
\newcommand{\eeq}{\end{equation}}

\renewcommand{\epsilon}{\varepsilon}
\DeclareMathOperator{\Var}{Var}
\DeclareMathOperator{\Cov}{Cov}

\title{Locality of critical percolation on expanding graph sequences}
\date{\today}

\newcommand{\E}{\mathbb{E}}
\newcommand{\bP}{\mathbb{P}}
\newcommand{\Ind}[1]{\mathbf{1}\{#1\}}

\newcommand{\lwc}{\to_\textit{lwc}}

\raggedbottom

\newcommand{\GG}{\mathscr{G}}
\newcommand{\bd}{\bm{d}}

\renewcommand{\P}{\mathbb{P}}
\newcommand{\bL}{\mathbf{L}}

\DeclareMathOperator{\br}{br}

\linespread{1.1}
\raggedbottom

\newcommand{\bemph}[1]{\textbf{\textup{#1}}}

\author{Michael Ren$^{\star*}$}
\author{Nike Sun$^\star$}
\thanks{$^\star$Massachusetts Institute of Technology. $^*$University of Cambridge.}

\begin{document}

\begin{abstract} We study the locality of critical percolation on finite graphs: let $G_n$ be a sequence of finite graphs, converging locally weakly to a (random, rooted) infinite graph $G$. Consider Bernoulli edge percolation: does the critical probability for the emergence of an infinite component on $G$ coincide with the critical probability for the emergence of a linear-sized component on $G_n$? In this short article we give a positive answer provided the graphs $G_n$ satisfy an expansion condition, and the limiting graph $G$ has finite expected root degree. The main result of Benjamini, Nachmias, and Peres (2011), where this question was first formulated, showed the result assuming the $G_n$ satisfy a uniform degree bound and uniform expansion condition, and converge to a deterministic limit $G$. Later work of Sarkar (2021) extended the result to allow for a random limit $G$, but still required a uniform degree bound and uniform expansion for $G_n$. Our result replaces the degree bound on $G_n$ with the (milder) requirement that $G$ must have finite expected root degree. Our proof is a modification of the previous results, using a pruning procedure and the second moment method to control unbounded degrees.
\end{abstract}

\maketitle

\section{Introduction}

In this article we study the question of \bemph{locality of critical percolation}, as formulated by \cite{MR2773031}. Informally, if $G_n$ is a graph sequence converging locally weakly to the random graph $(G,\rho)$, then does Bernoulli bond percolation have the same critical probability on $G$ as on $G_n$? The main result of \cite{MR2773031} gives a positive answer in the case that the $G_n$ have a uniform Cheeger constant $h>0$ and a uniform degree bound $d<\infty$, and converges locally weakly to a deterministic limiting graph $(G,\rho)$. A later work \cite{MR4275958} extends this result to the more general case of a random limiting graph $(G,\rho)$, but still requires that the graphs $G_n$ satisfy uniform expansion and a uniform degree bound. In this article we extend the result of \cite{MR4275958} by further relaxing the degree condition: more precisely, we show that the degree restriction on $G_n$ can be replaced with the requirement that the limit $(G,\rho)$ must have finite expected root degree.

\subsection{Statement of main result}

To formally state our main result, we set some notations and review some standard definitions (see \cite{MR2354165}). If $v$ is a vertex in graph $H$, we let $B_R(v;H)$ be the $R$-neighborhood of $v$ in $H$; we can regard $B_R(v;H)$ as a graph rooted at $v$.

\begin{dfn}[space of rooted graphs]\label{d:rooted.gr}
Let $\GG_\star$ denote the space of connected locally finite rooted
graphs $(G,\rho)$, modulo rooted isomorphism. A metric on $\GG_\star$ is given by
	\beq\label{e:Gstar.metric}
	\bd\Big((G_1,\rho_1),(G_2,\rho_2)\Big)
	= \inf\bigg\{ \frac1{1+R}
	: R\ge0, B_R(\rho_1;G_1)
	\cong B_R(\rho_2;G_2)
	\bigg\}\in[0,1]\,.\eeq
The space $\GG_\star$ is separable and complete in this metric.
\end{dfn}

\begin{dfn}[local weak convergence]\label{d:lwc}
Suppose $G_n\equiv(V_n,E_n)$ is a sequence of finite graphs, and let $U_n$ be a uniformly random vertex in $G_n$. We say $G_n$ \bemph{converges locally weakly} to the (random) element $(G,\rho)\in\GG_\star$ if $B_R(U_n;G_n)$ converges in law to $(G,\rho)$ in the topology of $\GG_\star$. This  will be denoted 
$G_n\lwc (G,\rho)$; we will often write simply $G_n\lwc G$ with the understanding that $G$ is a (random) rooted graph.
\end{dfn}

\begin{dfn}[expanding graph sequences]\label{d:expand}
Given a finite graph $G=(V,E)$ and $0<\delta\le1/2$, define
	\[
	h_\delta(G)
	\equiv\min\bigg\{ 
	\frac{|E_G(A,V\setminus A)|}{|A|}
	: A\subseteq V,
	\delta\le\frac{|A|}{|V|}
		\le\frac12\bigg\}\,,
	\]
where $E_G(A,B)$ denotes the set of all edges in $G$ between the vertex sets $A$ and $B$. We say that a sequence $G_n$ of finite graphs is \bemph{expanding} if 
for all $\delta>0$ we have $\liminf_n h_\delta(G_n)>0$.
That is to say, for all $\delta>0$ there exist $n_\delta<\infty$ and $c_\delta>0$ such that $h_\delta(G_n)\ge c_\delta$ for all $n\ge n_\delta$.
\end{dfn}

\begin{dfn}[critical percolation]\label{d:crit.perc}
Let $G$ be an infinite graph, and let $\P_p$ denote the law of a Bernoulli bond percolation (or edge percolation) configuration $\omega=\omega(G)$ on $G$. Then
	\[
	p_c(G)
	= \sup_p \bigg\{ p\in[0,1]
	: \P_p\Big(\textup{$\omega$ has no infinite component}\Big)=1
	\bigg\}
	\]
is the \bemph{critical probability} for Bernoulli bond percolation on $G$.\end{dfn}

\begin{thm}\label{t:main}
Suppose that $G_n=(V_n,E_n)$ is an expanding sequence of finite graphs such that $G_n\lwc (G,\rho)$ for some locally finite random infinite rooted graph $(G,\rho)$ with $\mathbb E[\deg\rho]<\infty$. If $\omega(G_n)$ denotes edge percolation on $G_n$ with probability $p$, then the following hold:
\begin{itemize}
    \item[(i)] The critical edge percolation probability $p_c(G)$ is almost surely constant.
    \item[(ii)] If $p<p_c(G)$, then for any $\alpha>0$, the probability that the largest component in $\omega(G_n)$ has size more than $\alpha|V_n|$ tends to $0$ as $n\rightarrow\infty$.
    \item[(iii)] If $p>p_c(G)$, then there exists $\alpha>0$ such that the probability that the largest component in $\omega(G_n)$ has size more than $\alpha|V_n|$ tends to $1$ as $n\rightarrow\infty$.
\end{itemize}
Items (i)--(iii) can be summarized by saying that ``$p_c((G_n)_{n\ge0})=p_c(G)$.''
\end{thm}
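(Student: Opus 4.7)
The plan is to follow the broad strategy of \cite{MR2773031, MR4275958}, replacing their uniform degree bound on $G_n$ by a pruning step combined with a second moment argument.

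Part (i) is handled as in \cite{MR4275958}: the limit $(G,\rho)$ is a unimodular random rooted graph, $p_c(G)$ is a measurable functional depending only on the rooted isomorphism class of $G$ (and in particular not on the root), and a standard 0-1 law for root-invariant functionals on unimodular random graphs gives that $p_c(G)$ is almost surely constant. Part (ii), the subcritical direction, reduces to showing that for $p<p_c(G)$ the fraction of vertices of $G_n$ lying in ``large'' $\omega(G_n)$-clusters tends to $0$. Since $|C(\rho)|$ is almost surely finite in the limit, sharpness of the phase transition on unimodular random graphs (as developed in \cite{MR4275958}) upgrades this to $\chi_p(\rho)<\infty$, and LWC applied to the local observable $v\mapsto |C(v)\cap B_K(v;G_n)|$ together with the finite expected degree assumption yields $\mathbb{P}(|C(U_n)|\ge M)\to 0$ uniformly in $n$ as $M\to\infty$. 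The Markov-type bound
\[
\mathbb{P}\bigl(\text{largest component}\ge\alpha|V_n|\bigr)\;\le\;\tfrac{1}{\alpha}\,\mathbb{P}\bigl(|C(U_n)|\ge\alpha|V_n|\bigr),
\]
valid because a macroscopic cluster is hit by $U_n$ with probability at least $\alpha$, then yields (ii).

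The heart of the argument is (iii). Fix $p>p_c(G)$ and choose $p'\in(p_c(G),p)$; by (i), in $\omega_{p'}(G)$ the root lies in an infinite cluster with strictly positive probability $\theta$. By LWC transferred to the percolated graphs,
\[
\mathbb{E}\,\#\{v\in V_n : |C(v;\omega_{p'}(G_n))|\ge K\}\;\ge\;\tfrac{\theta}{2}|V_n|
\]
for $K$ fixed large and $n$ large. A second moment argument then concentrates the corresponding ``big-cluster'' vertex set $S_n$ near its mean. Writing $\omega_p=\omega_{p'}\cup\omega_{\mathrm{sp}}$ as independent percolations with effective sprinkling parameter $p_{\mathrm{sp}}>0$, I would show that the sprinkling merges most of $S_n$ into a single giant component: if a non-trivial partition $S_n=A\sqcup B$ remained separated after sprinkling, the expansion of $G_n$ forces $\Omega(|V_n|)$ edges of $G_n$ between $A$ and $B$, and with overwhelming probability at least one is opened by the sprinkling.

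The main obstacle---and the point at which our argument departs from \cite{MR2773031, MR4275958}---is running this sprinkling step without a uniform degree bound on $G_n$. The plan is to \emph{prune}: for a large threshold $M$, let $V_n^M=\{v\in V_n:\deg_{G_n}(v)\le M\}$ and let $G_n^M$ be the induced subgraph. Because $\mathbb{E}[\deg\rho]<\infty$, Markov's inequality in the limit gives $\mathbb{P}(\deg\rho>M)\to 0$ as $M\to\infty$, and LWC transfers this to: for any $\epsilon>0$ one may choose $M$ so that $|V_n\setminus V_n^M|\le\epsilon|V_n|$ for all $n$ sufficiently large. The pruned graph $G_n^M$ has maximum degree $M$, and a short combinatorial check shows that it inherits the expansion of $G_n$ up to errors vanishing with $M$. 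The sprinkling argument then applies on $G_n^M$ in the same form as in \cite{MR2773031, MR4275958}, and the resulting giant component is also macroscopic in $G_n$ since $V_n^M$ already contains a $(1-\epsilon)$ fraction of vertices. I expect the most delicate point to be the second moment estimate for $S_n\cap V_n^M$: high-degree vertices can inflate neighborhood sizes in ways not directly controlled by LWC alone, and the finite expected root degree is precisely the integrability hypothesis needed to tame this effect.
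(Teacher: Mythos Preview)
Your outline has the right overall shape, but several steps contain genuine gaps.

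\textbf{Part (i).} You claim that $p_c(G)$ is a.s.\ constant by ``a standard 0-1 law for root-invariant functionals on unimodular random graphs.'' No such general 0-1 law exists: a unimodular random rooted graph need not be ergodic, and a nontrivial mixture of ergodic components can have $p_c$ non-constant. The constancy of $p_c(G)$ here is not free; it is a consequence of the expansion hypothesis. The paper (following \cite{MR4275958}) proves it directly: if $p_c(G)$ took two values with positive probability, one partitions $V_n$ into two linear-sized sets according to which value the local neighborhood mimics, uses expansion plus Menger's theorem to find many short paths between them, and derives a contradiction with a local distinguishability lemma. In other words, expansion is what forces the limit to be extremal. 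Your sketch skips this entirely.

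\textbf{Part (ii).} Your route through sharpness of the phase transition and $\chi_p(\rho)<\infty$ is unnecessary and not obviously available in this generality (it is not what \cite{MR4275958} does). The paper's argument is elementary: from your own inequality $\alpha\,\P(|C_n|\ge\alpha|V_n|)\le \P(|C(U_n)|\ge\alpha|V_n|)$, one bounds the right side by $\P(U_n\leftrightarrow\partial B_R(U_n;G_n))+\P(|B_R(U_n;G_n)|\ge\alpha|V_n|)$, passes to the limit by local weak convergence, and sends $R\to\infty$. Neither expansion nor $\E[\deg\rho]<\infty$ is used.

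\textbf{Part (iii), pruning.} Pruning at a fixed threshold $M$ does not preserve the local weak limit: if $\deg\rho>M$ with positive $\mu$-probability, then $G_n^M$ converges to a different graph $G^{(M)}$, and you would need $p>p_c(G^{(M)})$, which is not implied by $p>p_c(G)$ (indeed $p_c(G^{(M)})\ge p_c(G)$). The paper instead prunes edges incident to vertices of degree $\ge k_n$ with $k_n\to\infty$ chosen (via an integrability lemma) so that the pruned graphs $\bar G_n$ have uniformly bounded \emph{average} degree, still satisfy $\bar G_n\lwc(G,\rho)$, and remain expanding. Bounded average degree, not bounded maximum degree, is what the sprinkling step actually needs.

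\textbf{Part (iii), sprinkling.} Your statement that ``expansion forces $\Omega(|V_n|)$ edges between $A$ and $B$, and with overwhelming probability at least one is opened by the sprinkling'' is not the right mechanism. The sets $A,B$ are unions of $\omega_{p'}$-clusters, so a single sprinkled edge between them is not enough unless it happens to have both endpoints in $A\cup B$; one needs an open \emph{path}. The paper uses Menger's theorem to extract $\Omega(|V_n|)$ edge-disjoint $A$--$B$ paths in $G_n$, then uses the bounded average degree to conclude that a positive fraction of these paths have length $O(1)$, so that each is fully opened by the sprinkling with probability bounded below. A union bound over the at most $2^{|V_n|/R}$ partitions of large clusters then finishes. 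Your second moment concentration for $X_n(R)$ is correct in spirit and matches the paper's replacement for the bounded-degree concentration bound of \cite{MR2773031}.
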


\textbf{Theorem~\ref{t:main} removes the uniform degree bound assumption from \cite{MR2773031} and \cite{MR4275958}.} Further, 
\cite{MR2773031,MR4275958} assume \textbf{uniform expansion}
($h_\delta(G_n) \ge c_\delta$ for all $n\ge1$), while Theorem~\ref{t:main} uses only the weaker expansion assumption of Definition~\ref{d:expand}
($h_\delta(G_n) \ge c_\delta$ for all $n \ge n_\delta$). In fact, our argument for the unbounded degree setting involves pruning high-degree vertices, which can break a uniform expansion assumption.

Additionally, we remark that the locality result can fail without some kind of expansion condition. As an example,\footnote{We learned of this example from Elchanan Mossel.} let $h_n\to0$, and let $H$ be a graph on $d/h_n$ vertices, uniformly random subject to the requirement that all vertices have degree $2d$ except for one vertex $v_*(H)$ of degree $d$.
Let $H_1,\ldots,H_n$ be disjoint copies of $H$, and let $J$ be a random $d$-regular graph on the vertices $\{v(H_1),\ldots,v(H_n)\}$. Let $G_n$ be the union of $H_1,\ldots,H_n,J$, so it is a $(2d)$-regular graph. It has poor expansion: if we take $A=H_1 \cup \ldots \cup H_k$,
 then there are at most $kd$ edges between $A$ and its complement, so
	\[
	\frac{|E_G(A,V\setminus A)|}{|A|}
	\le \frac{kd}{kd/h_n} = h_n\,.
	\]
Then $G_n$ converges locally weakly to the $(2d)$-regular tree $G$, which has critical probability $p_c(G)=1/(2d-1)$. On the other hand, the sequence of finite graphs has critical probability 
$p_c((G_n)_{n\ge0}) \asymp 1/(d-1)$, as the emergence of a giant component on $G_n$ is determined by percolation on $J$.

\subsection{Related work} The locality question, as formulated by \cite{MR2773031}, was motivated by a related conjecture of Oded Schramm (see \cite[Conjecture~1.2]{MR2773031}) for the setting of infinite, vertex transitive graphs $G_n$. This conjecture has since been proven in a few special cases, e.g.,
for uniformly nonemenable transitive graphs \cite{song2014locality},
for abelian Cayley graphs \cite{MR3630298}, 
for transitive graphs with exponential growth \cite{MR4112717}, and
for transitive graphs with exponential growth \cite{contreras2022locality}.

Percolation on general finite graphs, initiated by \cite{MR2073175}, has also been well-studied. Beyond the existence of a percolation threshold for sequences of graphs \cite{MR2522948,MR3122364},
work has also been done to address the uniqueness of the giant component  \cite{MR2073175,easo2021supercritical} 
and scaling windows around the critical probability
\cite{MR2583058,MR2155704}. We remark that many of these results which give detailed descriptions of behavior above, below, and near the threshold are about specific graph sequences such as random regular graphs or rely on strong assumptions about the graphs such as having uniformly bounded degrees. On the other hand, the results from \cite{MR2773031} and our results apply more generally but only show the existence of a threshold for locally weakly convergent graph sequences.

In the case that $G_n$ converges locally weakly to a (random, rooted) \textbf{tree} $G$, more can be said about the percolation threshold: it is well known that $p_c(G)$ is the reciprocal of the tree \textbf{branching number} $\br(G)$ \cite{MR1062053}. More recently, physicists have proposed \cite{PhysRevLett.113.208702} that $p_c((G_n)_{n\ge0})$ can be estimated by the reciprocal of $\lambda^\textup{NB}(G_n)$, the top eigenvalue of the nonbacktracking matrix of $G_n$. Our main result Theorem~\ref{t:main}, for the case the local limit $G$ is a tree, implies 
$p_c((G_n)_{n\ge0})=1/\br(G)$. This naturally suggests that one approach for proving the conjecture of \cite{PhysRevLett.113.208702} would be to relate the asympotics of $\lambda(B_n)$ to the branching number of $G_n$.

\subsection{Organization} 
In Section~\ref{s:prelim} we prove some preliminary consequences of the local weak convergence assumption, and give our pruning argument. 
In Section~\ref{s:proof} we complete the proof of Theorem~\ref{t:main}.

\subsection*{Acknowledgements}  
We thank Elchanan Mossel, Lenka Zdeborov\'a, Jiaoyang Huang, and Fan Wei for helpful conversations. Research supported in part by
NSF CAREER grant DMS-1940092.

\section{Preliminaries}\label{s:prelim}

We fix some basic notation to be used throughout. In a graph $H$, let $d_H(u,v)$ be the distance between vertices $u$ and $v$; we will sometimes write simply $d(u,v)$ if $H$ is clear from context. Recall that $B_R(v;H)$ denotes the $R$-neighborhood of $v$ in $H$, i.e., the subgraph of $H$ induced by all vertices within distance $R$ of $v$. More generally, if $S$ is any subset of vertices in $H$, we write $B_R(S;H)$ for the subgraph of $H$ induced by all vertices within distance $R$ of $S$. We write $\partial B_R(v;H)$ for the set of vertices in $H$ at exactly distance $R$ from $v$.

We will also be working with a few different probability measures throughout the proof. Let $\P_p$ and $\P_{n,p}$ denote edge percolation on $G$ and $G_n$, respectively, with probability $p$. We use $\omega(G)$ and $\omega(G_n)$ to denote the percolation configurations. Let $\bL_n$ be the law of a uniform random vertex $U_n$ of $G_n$, and let $\mu$ be the law of $(G,\rho)$.

\subsection{Continuity lemma}

It follows directly from the definition of local weak convergence (Definition~\ref{d:lwc}) that if $f$ is any bounded continuous function on the space $\GG_\star$ (Definition~\ref{d:rooted.gr}), and $G_n\lwc (G,\rho)$, then 
	\beq\label{e:lwc.cts}
	\E_{\bL_n} f(G_n,U_n)
	\stackrel{n\to\infty}{\longrightarrow}
	\int f(G,\rho) \,d\mu\,.
	\eeq
An immediate consequence is the following:

\begin{lem}\label{l:localevent}
Suppose $f(G,\rho)$ is a bounded function on $\GG_\star$ that depends only on $B_R(\rho;G)$ for some finite $R$. Then $f$ is continuous on $\GG_\star$, so \eqref{e:lwc.cts} holds whenever $G_n\lwc (G,\rho)$.

\begin{proof}
Recall from Definition~\ref{d:rooted.gr} that a metric on $\GG_\star$ is given by $\bd$ from \eqref{e:Gstar.metric}. If 
	\[
	\bd\Big((G_1,\rho_1),(G_2,\rho_2)\Big) \le \frac1{1+R}
	\]
then $B_R(\rho_1;G_1)$ and $B_R(\rho_2;G_2)$ agree,
in which case the assumption on $f$ implies $f(G_1,\rho_1)=f(G_2,\rho_2)$. This shows that $f$ is (uniformly) continuous on $\GG_\star$.
\end{proof}
\end{lem}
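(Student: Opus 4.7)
The plan is to break the lemma into its two assertions---continuity of $f$, and the convergence \eqref{e:lwc.cts}---and address them in turn. The second is essentially free: the paragraph preceding the lemma already records that \eqref{e:lwc.cts} holds for any bounded continuous test function (an immediate consequence of the definition of local weak convergence in Definition~\ref{d:lwc}), so once continuity of $f$ is established the rest comes for free.

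To verify continuity I would unpack the metric $\bd$ from \eqref{e:Gstar.metric}. Its defining infimum ranges over all $R'\ge0$ at which the rooted $R'$-balls around $\rho_1$ and $\rho_2$ are isomorphic, so the strict inequality
\[
\bd\bigl((G_1,\rho_1),(G_2,\rho_2)\bigr)<\frac1{1+R}
\]
implies the existence of some $R'\ge R$ with $B_{R'}(\rho_1;G_1)\cong B_{R'}(\rho_2;G_2)$ as rooted graphs. Since a rooted isomorphism by definition preserves the root and therefore graph distances from it, it restricts to a rooted isomorphism $B_R(\rho_1;G_1)\cong B_R(\rho_2;G_2)$; the locality hypothesis on $f$ then forces $f(G_1,\rho_1)=f(G_2,\rho_2)$.

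Consequently $f$ is in fact locally constant on every open $\bd$-ball of radius $1/(1+R)$, which yields uniform continuity at once. I do not expect any serious obstacle: the whole argument is a direct verification against the definitions of $\bd$, rooted isomorphism, and the locality hypothesis on $f$. The only point requiring a moment of care will be the restriction step---that a rooted isomorphism of $R'$-balls genuinely restricts to a rooted isomorphism of smaller $R$-balls---but this follows in one line from the root-distance-preserving property of rooted isomorphisms.
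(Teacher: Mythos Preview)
Your proposal is correct and follows essentially the same route as the paper: both argue that points within $\bd$-distance roughly $1/(1+R)$ have isomorphic $R$-balls, so $f$ is locally constant (hence uniformly continuous), and then invoke the definition of local weak convergence for the final convergence statement. The only cosmetic difference is that you work with the strict inequality $\bd<1/(1+R)$ and an intermediate radius $R'\ge R$, whereas the paper states the non-strict inequality directly; your version is arguably a shade more careful about the infimum in \eqref{e:Gstar.metric}, but the substance is identical.
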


\begin{cor}\label{c:localevent}
If $G_n\lwc (G,\rho)$, then for any measurable event $E$ we have
	\[
	\bL_n\otimes \P_{n,p}
	\bigg(
	\omega(B_R(U_n;G_n)) \in E
	\bigg)
	\stackrel{n\to\infty}{\longrightarrow}
	\mu\otimes\P_p
	\bigg(
	\omega(B_R(\rho;G)) \in E
	\bigg)\,.
	\]
\begin{proof} 
Define a function $f$ on $\GG_\star$ by setting
	\[
	f(G,\rho)
	= \P_p\bigg( \omega( B_R(\rho;G)  ) \in E\bigg)\,.
	\]
Then $f$ clearly depends only on $B_R(\rho;G)$, so the claim follows from Lemma~\ref{l:localevent}.
\end{proof}
\end{cor}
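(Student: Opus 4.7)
The plan is to reduce the statement to the continuity lemma by integrating out the percolation randomness first. Concretely, I would define the function
\[
f(H,v) \;=\; \P_p\bigl(\omega(B_R(v;H)) \in E\bigr)
\]
on the space $\GG_\star$. Because the event $\{\omega(B_R(v;H)) \in E\}$ depends on the percolation configuration only through the edges inside $B_R(v;H)$, and these edges are determined by the rooted isomorphism class of $B_R(v;H)$, the value $f(H,v)$ depends on $(H,v)$ only through $B_R(v;H)$. In particular $f$ is well-defined on $\GG_\star$ (modulo rooted isomorphism), takes values in $[0,1]$ (so is bounded), and qualifies for Lemma~\ref{l:localevent}.

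Next, I would use Fubini to rewrite both sides of the desired identity as expectations of $f$. On the finite side, conditioning on $U_n$ gives
\[
\bL_n\otimes \P_{n,p}\bigl(\omega(B_R(U_n;G_n))\in E\bigr)
\;=\; \E_{\bL_n}\bigl[\P_{n,p}(\omega(B_R(U_n;G_n))\in E\mid U_n)\bigr]
\;=\; \E_{\bL_n} f(G_n,U_n),
\]
and similarly the right-hand side equals $\int f(G,\rho)\,d\mu$. Here one should observe that $\P_{n,p}$ restricted to the edges of $B_R(U_n;G_n)$ is the same Bernoulli$(p)$ product measure as the one used to define $f$, which is why the two expressions agree.

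Finally, since $f$ is bounded and depends only on $B_R(\rho;G)$, Lemma~\ref{l:localevent} yields
\[
\E_{\bL_n} f(G_n,U_n) \;\longrightarrow\; \int f(G,\rho)\,d\mu
\]
as $n\to\infty$, which is exactly the claim. No step here is really a serious obstacle; the only mild subtlety is verifying that $f$ is genuinely a function of the rooted isomorphism class (so that it descends to $\GG_\star$) and that the Bernoulli product structure makes the Fubini rearrangement legitimate.
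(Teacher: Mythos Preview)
Your proposal is correct and follows essentially the same approach as the paper: define $f(G,\rho)=\P_p(\omega(B_R(\rho;G))\in E)$, observe that it is bounded and depends only on $B_R(\rho;G)$, and apply Lemma~\ref{l:localevent}. The paper's proof is just a terser version of what you wrote, omitting the explicit Fubini step and the remark about well-definedness on $\GG_\star$.
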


Lemma~\ref{l:localevent} and Corollary~\ref{c:localevent} will be used repeatedly throughout the proofs below.

\subsection{Pruning}

We will assume that the average degree of $G_n$ is uniformly bounded, i.e., that there exists a constant $d$ such that the average degree of $G_n$ is at most $d$ for all $n$. In the case that this does not hold, the result can be recovered from the former case via a pruning argument, which we now give. The main result of this subsection is the following:

\begin{ppn}\label{p:prune}
Suppose $(G,\rho)$ is a (random) infinite rooted graph, locally finite, with $\E(\deg\rho)<\infty$. Suppose $G_n$ is an expanding sequence of finite graphs with $G_n\lwc (G,\rho)$. Then, for any $\epsilon>0$,
 we can choose a sequence of subgraphs $\bar{G}_n$ of $G_n$ such that
(i) each $\bar{G}_n$ has average degree at most $\E(\deg\rho)+\epsilon$;
and (ii) the $\bar{G}_n$ also form an expanding sequence, with $\bar{G}_n\lwc(G,\rho)$.
\end{ppn}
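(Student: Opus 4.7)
The plan is to construct $\bar G_n$ as the induced subgraph $G_n[\{v:\deg_{G_n}(v)\le K_n\}]$ for a cutoff $K_n\to\infty$ chosen slowly enough that the pruning is locally invisible and does not destroy expansion, yet fast enough that the average degree drops to within $\epsilon$ of $\E[\deg\rho]$. All three conclusions will follow from a single diagonal extraction applied to the local weak convergence $G_n\lwc(G,\rho)$.

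Concretely, for each fixed $K$, I would apply Lemma~\ref{l:localevent} to four bounded local functions of $(G_n,U_n)$: the truncated average degree $A_n(K)=|V_n|^{-1}\sum_v\deg v\cdot\Ind{\deg v\le K}$, the to-be-deleted fraction $\pi_n(K)=|V_n|^{-1}|\{v:\deg v>K\}|$, the cross-edge density $\eta_n(K)=|V_n|^{-1}|E_{G_n}(\{v:\deg v\le K\},\{v:\deg v>K\})|$, and for each $R\ge0$ the probability $q_n(K,R)$ that $B_R(U_n;G_n)$ contains a vertex of degree exceeding $K$. Their local weak limits are $\E[\deg\rho\cdot\Ind{\deg\rho\le K}]$, $\P(\deg\rho>K)$, $\E[\sum_{w\sim\rho}\Ind{\deg\rho\le K<\deg w}]$, and $\mu(\exists v\in B_R(\rho;G):\deg v>K)$, and these tend as $K\to\infty$ to $\E[\deg\rho]$, $0$, $0$, and $0$, respectively; the first is monotone convergence, the second and fourth use local finiteness of $(G,\rho)$, and the third is dominated convergence with integrable dominator $\deg\rho$, which is precisely the place where the hypothesis $\E[\deg\rho]<\infty$ is used. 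A standard diagonal extraction then produces $K_n\to\infty$ with $A_n(K_n)\le\E[\deg\rho]+\epsilon/2$, $\pi_n(K_n)\to0$, $\eta_n(K_n)\to0$, and $q_n(K_n,R)\to0$ for every fixed $R$.

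With $K_n$ so chosen, the average-degree claim is immediate from $2|E(\bar G_n)|\le A_n(K_n)|V_n|$ together with $|V(\bar G_n)|=|V_n|(1-\pi_n(K_n))$, yielding average degree at most $A_n(K_n)/(1-\pi_n(K_n))\le\E[\deg\rho]+\epsilon$ for $n$ large. For local weak convergence, a uniform vertex $\bar U_n$ of $\bar V_n$ satisfies $B_R(\bar U_n;G_n)\subseteq\bar V_n$ with probability $1-o(1)$ (combining $q_n(K_n,R)\to0$ with the $o(1)$ perturbation from conditioning on the event $\{U_n\in\bar V_n\}$), and on this event no vertex or edge of $B_R(\bar U_n;G_n)$ is touched by the pruning, so $B_R(\bar U_n;\bar G_n)=B_R(\bar U_n;G_n)$; combined with $G_n\lwc(G,\rho)$ this gives $\bar G_n\lwc(G,\rho)$.

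The main obstacle is the expansion of $\bar G_n$, which is the only step that crucially requires control of $\eta_n$. Given $\delta>0$, for $A\subseteq\bar V_n$ with $|A|/|\bar V_n|\in[\delta,1/2]$ and $n$ large enough that $\pi_n(K_n)\le1/2$, one has $|A|/|V_n|\in[\delta/2,1/2]$, so the expansion of $G_n$ gives $|E_{G_n}(A,V_n\setminus A)|\ge c_{\delta/2}|A|$. Peeling off the edges to the deleted side and using $|E_{G_n}(A,V_n\setminus\bar V_n)|\le\eta_n(K_n)|V_n|$,
\[
\frac{|E_{\bar G_n}(A,\bar V_n\setminus A)|}{|A|}\ge c_{\delta/2}-\frac{\eta_n(K_n)|V_n|}{|A|}\ge c_{\delta/2}-\frac{2\eta_n(K_n)}{\delta}\ge\frac{c_{\delta/2}}{2}
\]
for $n$ large, which is exactly the expansion demanded by Definition~\ref{d:expand}. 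Uniform expansion is in general irretrievably lost in this step, consistent with the paper's remark that the weaker form of Definition~\ref{d:expand} is needed precisely because of the pruning.
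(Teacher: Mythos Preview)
Your proof is correct and follows the same overall strategy as the paper---prune according to a slowly diverging degree cutoff $K_n$---but the implementation differs in two respects worth flagging.

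First, you delete the high-degree \emph{vertices} (passing to an induced subgraph), whereas the paper retains all vertices and removes only the \emph{edges} incident to high-degree vertices. Keeping the vertex set fixed makes the local weak convergence step marginally simpler in the paper, since the uniform law $\bL_n$ is unchanged; your handling of the $o(1)$ perturbation from conditioning on $\{U_n\in\bar V_n\}$ is correct, so this difference is cosmetic.

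Second, the paper separates the argument into two independent pieces: Lemma~\ref{l:prune} shows that for \emph{any} $k_n\to\infty$ the pruned graphs remain expanding and still converge locally weakly to $(G,\rho)$, using only local finiteness of the limit; Lemma~\ref{l:integrability} then invokes the hypothesis $\E[\deg\rho]<\infty$ solely to choose $k_n$ so that the average degree is controlled. You instead run a single diagonal extraction and use integrability also in the expansion step, via dominated convergence for the cross-edge density $\eta_n(K)$. Both arguments are valid; the paper's decomposition isolates more sharply the single place where $\E[\deg\rho]<\infty$ is genuinely needed, while your direct control of $\eta_n(K)$ gives a clean bound on the number of edges lost across the cut and avoids the paper's detour through $|B_1(S_n;G_n)|$.
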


\begin{lem}\label{l:prune}
Suppose $(G,\rho)$ is a (random) infinite rooted graph, locally finite, with $\E(\deg\rho)<\infty$. Suppose $G_n$ is an expanding sequence of finite graphs with $G_n\lwc (G,\rho)$. Let $k_n$ be any sequence of integers with $k_n\to\infty$, and let $\bar{G}_n$ be the subgraph of $G_n$ formed by removing all edges incident to vertices of degree at least $k_n$. Then $\bar{G}_n$ is also an expanding sequence with $\bar{G}_n\lwc (G,\rho)$.

\begin{proof}
Let $S_n$ denote the subset of all vertices in $G_n$ with degree at least $k_n$. We first claim that for any fixed $R<\infty$,
	\beq\label{e:Sn.nbd.bound}
	\lim_{n\to\infty}\frac{|B_R(S_n;G_n)|}{|V_n|}=0\,.
	\eeq
Indeed, if a vertex is within distance $R$ of $S_n$, then it must have at least $k_n$ vertices in its $(R+1)$-neighborhood, so
	\[\frac{|B_R(S_n;G_n)|}{|V_n|}
	\le
	\bL_n\bigg(
		\Big|B_{R+1}(U_n;G_n)\Big|\ge k_n\bigg)\,.\]
Since $k_n\to\infty$ by assumption, it follows that for any finite $k$ we have
	\[
\limsup_{n\to\infty}
	\bL_n\bigg(
		\Big|B_{R+1}(U_n;G_n)\Big|\ge k_n\bigg)
	\le \limsup_{n\to\infty}
	\bL_n\bigg(
		\Big|B_{R+1}(U_n;G_n)\Big|\ge k\bigg)
	\]
Finally, by
the assumption $G_n\lwc(G,\rho)\sim\mu$ together with Lemma~\ref{l:localevent}, we have
	\[
	\lim_{n\to\infty}\bL_n\bigg(
		\Big|B_{R+1}(U_n;G_n)\Big|\ge k\bigg)
	=\mu\bigg(\Big|B_{R+1}(\rho,G)\Big|\ge k\bigg)\,.
	\]
Combining the above calculations gives
	\[\limsup_{n\to\infty}\frac{|B_R(S_n;G_n)|}{|V_n|}
	\le \mu\bigg(\Big|B_{R+1}(\rho,G)\Big|
	\ge k\bigg)
	\]
for any finite $k$. In the above, the right-hand side can be made arbitrarily small by taking $k\to\infty$, while the left-hand side does not depend on $k$, so this proves the claim \eqref{e:Sn.nbd.bound}.

We next argue that $\bar{G}_n\lwc (G,
\rho)$. The graphs $G_n$ and $\bar{G}_n$ have the same vertex set, so 
if $U_n$ is a uniformly random vertex in $G_n$, then it is also a uniformly random vertex in $\bar{G}_n$. Moreover, it is clear that the subgraphs $B_R(U_n;G_n)$ and $B_R(U_n;\bar{G}_n)$ are the same as long as $U_n$ does not belong to $B_R(S_n;G_n)$. We see from \eqref{e:Sn.nbd.bound} that $U_n$ lies in $B_R(S_n;G_n)$ with probability $o_n(1)$, so we conclude that
$B_R(U_n;\bar{G}_n)$ also converges in law
to $B_R(\rho;G)$. This proves that $\bar{G}_n\lwc (G,
\rho)$, as claimed.

We now argue that $\bar{G}_n$ is an expanding sequence. Given $\delta>0$, let $A\subseteq V_n$ with $\delta\le |A|/|V_n|\le1/2$. Since the original sequence $G_n$ is assumed to be expanding, there exists $c_\delta>0$ such that
	\[
	\frac{|E_{G_n}(A,V_n\setminus A)|}{|A|}
	\ge c_\delta
	\]
as long as $n\ge n_\delta$. The graph $\bar{G}_n$ differs from $G_n$ only in the deletion of edges incident to $S_n$, and the total number of deleted edges can be upper bounded by $|B_1(S_n;G_n)|$. It follows using \eqref{e:Sn.nbd.bound} that
	\[
	\frac{|E_{\bar{G}_n}(A,V_n\setminus A)|}{|A|}
	\ge
	\frac{|E_{{G}_n}(A,V_n\setminus A)|}{|A|}
	-\frac{|B_1(S_n;G_n)|}{|V_n|}
	\frac{|V_n|}{|A|}
	\ge 
	\frac{|E_{{G}_n}(A,V_n\setminus A)|}{|A|}
	- \frac{o_n(1)}{\delta}\,,
	\]
which is at least $c_\delta/2$ for $n$ large enough.
This shows that $\bar{G}_n$ is also an expanding sequence, and this concludes the proof of the lemma. 
\end{proof}
\end{lem}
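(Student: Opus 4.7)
The plan is to proceed in three closely linked steps. Set $S_n \equiv \{v \in V_n : \deg_{G_n}(v) \geq k_n\}$. First, I would show that the ``pruned region'' $B_R(S_n;G_n)$ occupies a vanishing fraction of $V_n$ for each fixed $R \geq 0$; then transfer local weak convergence from $G_n$ to $\bar G_n$; finally, transfer the expansion property.

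For the density estimate, the key observation is that any $u$ within distance $R$ of some $v \in S_n$ has at least $k_n$ vertices in its $(R+1)$-neighborhood, namely the neighbors of $v$. Hence
\[
\frac{|B_R(S_n;G_n)|}{|V_n|}
\leq \bL_n\bigl(|B_{R+1}(U_n;G_n)| \geq k_n\bigr)
\leq \bL_n\bigl(|B_{R+1}(U_n;G_n)| \geq k\bigr)
\]
for every fixed $k$ once $n$ is large enough that $k_n \geq k$. The rightmost expression is the expectation of a bounded local functional on $\GG_\star$, so by Lemma~\ref{l:localevent} it converges to $\mu(|B_{R+1}(\rho;G)| \geq k)$, and local finiteness of $(G,\rho)$ drives this to zero as $k \to \infty$. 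Local weak convergence of $\bar G_n$ then follows immediately: $\bar G_n$ and $G_n$ share a vertex set (so $U_n$ is still uniform in $\bar G_n$), and whenever $U_n \notin B_R(S_n;G_n)$ the $G_n$-ball $B_R(U_n;G_n)$ is disjoint from $S_n$, so no pruned edge lies in it, and $B_R(U_n;\bar G_n) = B_R(U_n;G_n)$ off an event of vanishing probability.

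For expansion, fix $\delta > 0$ and pick $c_\delta > 0$ with $h_\delta(G_n) \geq c_\delta$ for all $n \geq n_\delta$. For $A \subseteq V_n$ with $\delta \leq |A|/|V_n| \leq 1/2$, the crossing edges in $\bar G_n$ and in $G_n$ differ only by pruned edges, all of which have both endpoints in $B_1(S_n;G_n)$. Bounding the total pruned-edge count and dividing by $|A| \geq \delta|V_n|$ should produce a correction of size $o_n(1)/\delta$, giving
\[
\frac{|E_{\bar G_n}(A, V_n \setminus A)|}{|A|}
\geq c_\delta - \frac{o_n(1)}{\delta}
\geq \frac{c_\delta}{2}
\]
uniformly in $A$ for $n$ sufficiently large, i.e.\ $h_\delta(\bar G_n) \geq c_\delta/2$.

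I expect the most delicate part to be the count of pruned edges in the expansion step: the bare vertex-count bound $|B_1(S_n;G_n)|$ from the first step is not literally an upper bound on the pruned edge count, since a small set of very high-degree vertices could individually contribute many edges. Ruling this out is where the hypothesis $\E[\deg\rho] < \infty$ enters: the truncated degree $\deg \wedge M$ is a bounded local functional, so Lemma~\ref{l:localevent} combined with monotone convergence in $M$ controls the contribution of high-degree vertices and lets one show that the total degree of vertices in $B_1(S_n;G_n)$ is $o(|V_n|)$, which is the genuine quantitative estimate the cut count needs.
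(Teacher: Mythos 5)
Your first two steps (the density estimate $|B_R(S_n;G_n)|=o(|V_n|)$ and the transfer of local weak convergence) are exactly the paper's argument and are correct. The gap is in your third step. You are right to distrust the crude bound ``number of pruned edges $\le |B_1(S_n;G_n)|$'' (this is in fact the bound the paper itself invokes, and it is false in general: in a $3$-regular expander on $n$ vertices with a clique added on $\sqrt{n}$ of them, the pruned edges number about $n/2$ while $|B_1(S_n;G_n)|=O(\sqrt n)$). But your proposed repair rests on a claim that is also false under the hypotheses: the total degree of $S_n$ (let alone of $B_1(S_n;G_n)$) need \emph{not} be $o(|V_n|)$. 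In the same clique example the sequence is expanding, converges locally weakly to the $3$-regular tree (so $\E[\deg\rho]=3<\infty$), and yet $\sum_{v\in S_n}\deg v\sim n$. The mechanism you invoke --- that $\deg\wedge M$ is a bounded local functional, so Lemma~\ref{l:localevent} plus monotone convergence controls the high-degree contribution --- only yields $\E_{\bL_n}[\deg U_n\wedge M]\to\E[\deg\rho\wedge M]$ for each fixed $M$. It says nothing about $\E_{\bL_n}[\deg U_n]-\E_{\bL_n}[\deg U_n\wedge M]$, i.e.\ about uniform integrability of $\deg U_n$, and that quantity is precisely the contribution of the high-degree vertices. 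Indeed the whole point of the pruning proposition is that $G_n$ may have average degree tending to infinity even though $\E[\deg\rho]<\infty$, so no argument that concludes ``total pruned degree is $o(|V_n|)$'' can be correct.

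A workable expansion argument must therefore avoid bounding the total number of pruned edges at all. One route: for a cut set $A$ with $\delta\le|A|/|V_n|\le 1/2$, pass to $A'=A\setminus B_1(S_n;G_n)$, note that no edge incident to $A'$ is pruned, apply the expansion of $G_n$ to $A'$ (whose size is $|A|-o(|V_n|)$), and then control only the edges running from $A'$ into $A\cap(B_1(S_n;G_n)\setminus S_n)$; these land in a set of $o(|V_n|)$ vertices each of degree below $k_n$, and bounding their number requires either choosing $k_n$ suitably (as Proposition~\ref{p:prune} is free to do via Lemma~\ref{l:integrability}, which gives $\E_{\bL_n}[\deg U_n;\deg U_n<k_n]\le\E[\deg\rho]+\epsilon$ and then a pigeonhole over the annuli $B_R(S_n)\setminus B_{R-1}(S_n)$) or some additional idea. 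As written, both your sketch and the paper's one-line bound leave this step unjustified, and your specific quantitative claim is the one that is demonstrably false.
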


\begin{lem}\label{l:integrability}
Let $D_n$ be a nonnegative integer-valued random variable converging in law to an integrable random variable $D$. Then, for any $\epsilon>0$,
it is possible to choose a sequence $k_n\to\infty$ such that $\E(D_n; D_n < k_n) \le \E D + \epsilon < \infty$.

\begin{proof}
First we note that for any finite integer $k$,
	\[
	\E(D; D < k)
	=\sum_{\ell\ge1} \bP\bigg(
	D\Ind{D< k} \ge\ell\bigg)
	=\sum_{\ell=1}^{k-1}\bP( \ell \le D < k)
	\le\sum_{\ell=1}^{k-1}
		\bP(D\ge \ell)\,.
	\]
It follows from the assumption that for any finite $\ell$ we have $\bP(D_n\ge\ell) \to \bP(D\ge\ell)$ as $n\to\infty$, and consequently
$\bP(D_n\ge\ell) \le \bP(D\ge\ell) + \epsilon/2^\ell$ for all $n\ge m_\ell$, where $m_\ell$ is a finite integer depending on $\ell$ and $\epsilon$. We then set $k_n\equiv n$ if $\bP(D_n\ge\ell) \le \bP(D\ge\ell) + \epsilon/2^\ell$ for all $\ell\ge1$ and otherwise
	\[
	k_n \equiv \min\bigg\{\ell\ge1 :
	\bP(D_n\ge\ell) > \bP(D\ge\ell) + \frac{\epsilon}{2^\ell} 
	 \bigg\}
	\ge \min\bigg\{\ell\ge1 :
		m_\ell > n\bigg\}.
	\]
Our chosen $k_n$ must tend to infinity as $n\to\infty$ since the $m_\ell$ are finite.
The definition of $k_n$ implies
	\[
	\E(D_n; D_n < k_n)
	\le \sum_{\ell=1}^{k_n-1} \bP(D_n\ge \ell)
	\le\sum_{\ell=1}^{k_n-1}\bigg\{
	\bP(D\ge \ell) + \frac{\epsilon}{2^\ell} 
	\bigg\}
	\le \E D + \epsilon\,,
	\]
as required. This finishes the proof.
\end{proof}
\end{lem}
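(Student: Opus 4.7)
The plan is to reduce the bound on $\E(D_n; D_n<k_n)$ to a tail-sum and then use integer-valued pointwise convergence of tails together with a carefully chosen truncation level $k_n$.

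First I would rewrite the truncated expectation using the layer-cake identity: for any finite integer $k$,
\[
\E(D; D<k) = \sum_{\ell=1}^{k-1} \P(\ell \le D < k) \le \sum_{\ell=1}^{k-1} \P(D \ge \ell),
\]
and similarly for $D_n$ with $k$ replaced by $k_n$. Thus the task reduces to choosing $k_n\to\infty$ so that the tail sum $\sum_{\ell=1}^{k_n-1} \P(D_n\ge \ell)$ is not much larger than $\sum_{\ell\ge1}\P(D\ge\ell)=\E D$.

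Next, since $D$ is integer-valued, every threshold $\ell\in\mathbb{Z}_{\ge1}$ is a continuity point of the CDF, so $D_n \to D$ in law implies $\P(D_n\ge\ell) \to \P(D\ge\ell)$ for each fixed $\ell$. For each $\ell$, I can then pick a finite integer $m_\ell$ so that $\P(D_n\ge\ell)\le \P(D\ge\ell)+\epsilon/2^\ell$ for all $n\ge m_\ell$. The natural choice is to define $k_n$ as the largest integer $L$ such that $m_\ell \le n$ holds simultaneously for every $\ell \le L$ (or equivalently the first $\ell$ at which the bound $\P(D_n\ge\ell)\le\P(D\ge\ell)+\epsilon/2^\ell$ fails, with $k_n=n$ if no such $\ell$ exists). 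Because each $m_\ell$ is finite, this forces $k_n\to\infty$, while by construction the bound $\P(D_n\ge\ell)\le \P(D\ge\ell)+\epsilon/2^\ell$ holds for all $\ell<k_n$.

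Putting these pieces together gives
\[
\E(D_n; D_n<k_n) \le \sum_{\ell=1}^{k_n-1}\P(D_n\ge\ell) \le \sum_{\ell=1}^{k_n-1}\bigl\{\P(D\ge\ell)+\epsilon/2^\ell\bigr\} \le \E D + \epsilon,
\]
which is the desired bound. The only subtle step is confirming that the recipe for $k_n$ actually diverges; this is where the finiteness of each $m_\ell$ (coming from convergence in law at individual integer thresholds) is essential. Everything else is bookkeeping with a geometrically summable error budget.
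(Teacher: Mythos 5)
Your proposal is correct and follows essentially the same route as the paper's proof: the layer-cake bound, the choice of $m_\ell$ with error budget $\epsilon/2^\ell$, the definition of $k_n$ as the first threshold where the bound fails, and the final summation are all identical. The only tiny imprecision is the claim that each integer $\ell$ is a continuity point of the CDF of $D$ (it generally is not for integer-valued $D$); the convergence $\P(D_n\ge\ell)\to\P(D\ge\ell)$ instead follows by evaluating at the continuity point $\ell-\tfrac12$, since $D_n$ and $D$ are integer-valued.
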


\begin{proof}[Proof of Proposition~\ref{p:prune}]
This follows immediately by combining  Lemmas~\ref{l:prune} and \ref{l:integrability}.
\end{proof}

\section{Proof of Theorem~\ref{t:main}}
\label{s:proof}

In this section we give the proof of Theorem~\ref{t:main}.
We prove parts (i), (ii), and (iii) respectively
in \S\ref{ss:rerooting.inv}, \S\ref{ss:subcrit}, and \S\ref{ss:supercrit}.
As noted above, our proofs are based on arguments of
\cite{MR2773031,MR4275958}, with modifications to handle unbounded degrees. With the aim of being self-contained, we present a full proof below, but will repeatedly point out similarities and differences with the prior works.

\subsection{Rerooting invariant functions}
\label{ss:rerooting.inv}

The proof of Theorem~\ref{t:main} part~(i), after applying Proposition~\ref{p:prune}, is very similar to the proof of the corresponding result in \cite{MR4275958}, but we have made some simplifications to the main lemma. As in \cite{MR4275958}, the proof shows that any rerooting invariant function of $G$ is almost surely constant, so the local weak limit of an expanding sequence of finite graphs with finite expected root degree is in fact an extremal unimodular graph (see Corollary~\ref{c:extremal} below).

\begin{ppn}\label{p:sarkar}
Suppose that $\mathcal G_1$ and $\mathcal G_2$ are disjoint subsets of $\GG_\star$ such that each $\mathcal{G}_i$ is closed under rerooting. Then, for any compact subsets $H_i\subseteq\mathcal G_i$ and any finite integer $K$, there exists a finite integer $R$ such that for any $(G_1,\rho_1)\in H_1$ and $(G_2,\rho_2)\in H_2$, we have that $B_R(u_1,G_1)$ is not isomorphic to $B_R(u_2,G_2)$ for all $u_1\in B_K(\rho_1,G_1),u_2\in B_K(\rho_2,G_2)$.
\end{ppn}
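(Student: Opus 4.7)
I will argue by contradiction, combining compactness of $H_1$ and $H_2$ with the closure of each $\mathcal G_i$ under rerooting. Assuming no finite $R$ suffices, for each integer $m$ I pick a pair $(G_i^m,\rho_i^m)\in H_i$ together with $u_i^m\in B_K(\rho_i^m,G_i^m)$ such that $B_m(u_1^m,G_1^m)\cong B_m(u_2^m,G_2^m)$ as rooted graphs.

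By compactness of $H_i$ in $(\GG_\star,\bd)$, I first pass to a subsequence along which $(G_i^m,\rho_i^m)\to(G_i^\infty,\rho_i^\infty)\in H_i$ for $i=1,2$. The heart of the argument will be a sublemma: along a further subsequence, the rerooted graphs $(G_i^m,u_i^m)$ also converge in $\GG_\star$, to $(G_i^\infty,\hat u_i)$ for some $\hat u_i\in B_K(\rho_i^\infty,G_i^\infty)$. To prove this I will use that for each $R\ge K$ and all large $m$ there exists a rooted isomorphism $\phi_R^m:B_R(\rho_i^m,G_i^m)\to B_R(\rho_i^\infty,G_i^\infty)$, and that such isomorphisms preserve distance to the root, so $\phi_R^m(u_i^m)$ lies in the finite set $B_K(\rho_i^\infty,G_i^\infty)$. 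A diagonal/pigeonhole argument — first stabilizing $\phi_R^m(u_i^m)$ in $m$ for each $R$, then passing to a subsequence of $R$'s along which that stabilized value is a single fixed vertex $\hat u_i$ — yields the desired convergence.

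Given the sublemma, the rest is quick. Since $(G_i^\infty,\hat u_i)$ is a rerooting of $(G_i^\infty,\rho_i^\infty)\in H_i\subseteq\mathcal G_i$, closure under rerooting gives $(G_i^\infty,\hat u_i)\in\mathcal G_i$. Meanwhile, restricting $B_m(u_1^m,G_1^m)\cong B_m(u_2^m,G_2^m)$ to $R$-balls for each fixed $R$ and combining with the convergence $(G_i^m,u_i^m)\to(G_i^\infty,\hat u_i)$ yields $B_R(\hat u_1,G_1^\infty)\cong B_R(\hat u_2,G_2^\infty)$ for every $R$, so $(G_1^\infty,\hat u_1)=(G_2^\infty,\hat u_2)$ in $\GG_\star$; this single element lies in $\mathcal G_1\cap\mathcal G_2$, the desired contradiction.

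The main obstacle will be the sublemma. A naive approach of declaring $\tilde H_i:=\{(G,u):(G,\rho)\in H_i,\ u\in B_K(\rho,G)\}$ precompact and taking limits does not immediately work, because the closure of $\tilde H_i$ need not lie in $\mathcal G_i$ (we have no topological closedness hypothesis on $\mathcal G_i$). The diagonalization is exactly what identifies the limit with a concrete rerooting of an element of $H_i$, so that closure under rerooting — the only structural hypothesis available — can be invoked.
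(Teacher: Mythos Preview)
Your argument is correct and takes a genuinely different route from the paper. The paper introduces the auxiliary function
\[
D_K\big((G_1,\rho_1),(G_2,\rho_2)\big)=\min\big\{\bd\big((G_1,u_1),(G_2,u_2)\big):u_i\in B_K(\rho_i;G_i)\big\},
\]
proves it is continuous on $\GG_\star\times\GG_\star$, observes that it is strictly positive on $H_1\times H_2$ by disjointness and rerooting-invariance, and then invokes compactness to obtain a uniform positive lower bound $\delta$; any $R$ with $1/(1+R)<\delta$ works. Your approach instead proceeds by contradiction and sequential compactness, tracking the image of $u_i^m$ under the rooted isomorphisms that witness convergence and pigeonholing in the finite set $B_K(\rho_i^\infty,G_i^\infty)$ to identify the limit of $(G_i^m,u_i^m)$ as a concrete rerooting of $(G_i^\infty,\rho_i^\infty)\in H_i$. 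Your diagnosis of the main obstacle is exactly right: one cannot simply take a limit in the closure of the rerooted set $\tilde H_i$, since $\mathcal G_i$ is only assumed closed under rerooting, not topologically closed. The paper's $D_K$ packaging sidesteps this cleanly and yields a reusable continuity lemma, whereas your argument is more hands-on but avoids introducing any auxiliary function. One minor remark: your diagonalization is more elaborate than necessary---since $B_K(\rho_i^\infty,G_i^\infty)$ is finite, a single pigeonhole step (choose for each $m$ one isomorphism $\phi_m$ of radius $R(m)\to\infty$ and pass to a subsequence on which $\phi_m(u_i^m)$ is constant) already suffices.
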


In other words, given a rooted graph from one of $\mathcal G_1$ and $\mathcal G_2$, we can determine which set it came from by looking at the $R$-neighborhood of an arbitrary vertex in the $K$-neighborhood of the root. This was proven in \cite{MR4275958} by considering finite open covers of the $H_i$. We now give a conceptually cleaner proof of this fact.

\begin{lem}\label{l:reroot}
Recall the metric $\bd((G_1.\rho_1),(G_2,\rho_2))$ on $\GG_\star$ defined by \eqref{e:Gstar.metric}. For any finite $K$, let $D_K:\GG_\star\times\GG_\star\to[0,1]$ be defined by
	\[
	D_K\Big((G_1,\rho_1),(G_2,\rho_2)\Big)
	\equiv \min\bigg\{
	\bd\Big( (G_1,u_1),(G_2,u_2)\Big)
	: u_i \in B_K(\rho_i;G_i)
	\bigg\}\,,
	\]
where $(G_i,u_i)$ denotes the graph $(G_i,\rho_i)$ rerooted at $u_i$.
The function $D_K$ is continuous.

\begin{proof}
Since the graphs $G_i$ are locally finite, the definition of $D_K$ is a minimum over finitely many quantities. Therefore we can choose $u_{i\star}\in B_K(\rho_i;G_i)$ such that
	\beq\label{e:choose.u.i}
	D_K\Big((G_1,\rho_1),(G_2,\rho_2)\Big)
	= \bd\Big( (G_1,u_{1\star}),(G_2,u_{2\star})\Big)\,.
	\eeq
Now suppose that $(H,\sigma)$ is any element of $\GG_\star$ that lies within distance $\delta=1/(1+R)$ of $(G_2,\rho_2)$, where we assume $R\ge K$. From the definition \eqref{e:Gstar.metric}, it implies that there is a rooted isomorphism $\varphi$
that maps $B_R(\rho_2;G_2)$ to $B_R(\sigma;H)$.
Let $v_{2\star}\equiv\varphi(u_{2\star})$. We then have
	\begin{align*}
	&D_K\Big((G_1,\rho_1),(H,\sigma)\Big)
	\le
	\bd\Big( (G_1,u_{1\star}),(H,v_{2\star})\Big)  \\
	&\qquad\le
	\bd\Big( (G_1,u_{1\star}),(G_2,u_{2\star})\Big)
	+\bd\Big((G_2,u_{2\star}),(H,v_{2\star})\Big)\\
	&\qquad\le D_K\Big((G_1,\rho_1),(G_2,\rho_2)\Big)
	+ \frac1{1+(R-K)}
	\end{align*}
where the first step is by the definition of $D_K$,
the second step is by the triangle inequality,
and the last step is by \eqref{e:choose.u.i} together with
the observation that $B_{R-K}(u_{2\star};G_2)$ must be isomorphic to 
$B_{R-K}(u_{2\star};H)$. By exchanging the roles of $(G_2,\rho_2)$
and $(H,\sigma)$ we must also have
	\[D_K\Big((G_1,\rho_1),(G_2,\rho_2)\Big)
	\le D_K\Big((G_1,\rho_1),(H,\sigma)\Big)
	+ \frac1{1+(R-K)}\,,
	\]
and combining the last two inequalities gives
	\[
	\bigg| D_K\Big((G_1,\rho_1),(G_2,\rho_2)\Big)
	-D_K\Big((G_1,\rho_1),(H,\sigma)\Big)\bigg|
	\le \frac1{1+(R-K)}\,.
	\]
This shows that $D_K:\GG_\star\times\GG_\star\to[0,1]$ is uniformly continuous in the second coordinate. By symmetry it is also uniformly continuous in the first coordinate, which proves the claim.
\end{proof}
\end{lem}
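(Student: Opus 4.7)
The plan is to prove that $D_K$ is uniformly continuous in each coordinate with a modulus that does not depend on the other coordinate; this yields joint continuity on $\GG_\star\times\GG_\star$. The key structural observation to exploit is that $\bd$ measures how far from the root one must travel to see a difference between two rooted graphs, so rerooting within the $K$-ball shifts this threshold by at most $K$. Since each graph is locally finite, $B_K(\rho_i;G_i)$ is a finite vertex set and the infimum defining $D_K$ is actually attained at some minimizing pair $(u_{1\star},u_{2\star})$.

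For uniform continuity in the second coordinate, fix $(G_1,\rho_1),(G_2,\rho_2)$ and let $(u_{1\star},u_{2\star})$ attain the minimum in $D_K((G_1,\rho_1),(G_2,\rho_2))$. Suppose $(H,\sigma)\in\GG_\star$ satisfies $\bd((G_2,\rho_2),(H,\sigma))\le 1/(1+R)$ for some integer $R\ge K$. By the definition of $\bd$, there is a rooted isomorphism $\varphi:B_R(\rho_2;G_2)\to B_R(\sigma;H)$. Set $v_{2\star}=\varphi(u_{2\star})$, which lies in $B_K(\sigma;H)$ because $\varphi$ preserves distance from the root. The graph-distance triangle inequality gives $B_{R-K}(u_{2\star};G_2)\subseteq B_R(\rho_2;G_2)$, so $\varphi$ restricts to a rooted isomorphism $B_{R-K}(u_{2\star};G_2)\cong B_{R-K}(v_{2\star};H)$, whence $\bd((G_2,u_{2\star}),(H,v_{2\star}))\le 1/(1+R-K)$. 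Since $v_{2\star}$ is an admissible rerooting vertex on the $H$ side in the definition of $D_K((G_1,\rho_1),(H,\sigma))$, combining this with the triangle inequality for $\bd$ on $\GG_\star$ yields
\[
D_K((G_1,\rho_1),(H,\sigma))\le \bd((G_1,u_{1\star}),(G_2,u_{2\star})) + \frac{1}{1+R-K} = D_K((G_1,\rho_1),(G_2,\rho_2)) + \frac{1}{1+R-K}.
\]

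Interchanging the roles of $(G_2,\rho_2)$ and $(H,\sigma)$ produces the matching reverse inequality, so the oscillation of $D_K$ in its second argument is bounded by $1/(1+R-K)$, uniformly over $(G_1,\rho_1)$. Since the definition of $D_K$ is symmetric in its two arguments, the same argument yields uniform continuity in the first coordinate, which together with the uniform modulus gives joint continuity. The main (and essentially only) subtle step is the claim that a rooted isomorphism on the $R$-ball around $\rho_2$ restricts to an isomorphism of the $(R-K)$-ball around any rerooting vertex $u_{2\star}\in B_K(\rho_2;G_2)$; this rests on a routine graph-distance triangle-inequality check, after which the proof reduces to bookkeeping with $\bd$.
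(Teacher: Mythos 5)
Your proposal is correct and follows essentially the same route as the paper's proof: attain the minimum at $(u_{1\star},u_{2\star})$ by local finiteness, push $u_{2\star}$ through the rooted isomorphism of $R$-balls to get an admissible vertex $v_{2\star}$ with $\bd((G_2,u_{2\star}),(H,v_{2\star}))\le 1/(1+R-K)$, apply the triangle inequality, and symmetrize. If anything, you are slightly more careful than the paper in noting explicitly that $v_{2\star}\in B_K(\sigma;H)$ and in justifying why $\varphi$ restricts to an isomorphism of the $(R-K)$-balls around the rerooted vertices.
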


\begin{proof}[Proof of Proposition~\ref{p:sarkar}]
By the assumption that the $\mathcal{G}_i$ are disjoint and closed under rerooting, the function $D_K$ must be strictly positive on
$\mathscr{G}_1\times\mathscr{G}_2$, and hence also on $H_1\times H_2$.
Since $D_K$ is continuous by Lemma~\ref{l:reroot}, we conclude that
the minimum of $D_K$ on the compact set $H_1\times H_2$ must be lower bounded by some $\delta>0$. Then, as long as we have $\delta>1/(1+R)$, it follows from the definition of $D_K$ that for any $(G_i,\rho_i)\in H_i$,
the graphs
$B_R(u_1;G_1)$ and $B_R(u_2;G_2)$ must be non-isomorphic
for all $u_i\in B_K(\rho_i;G_i)$.
\end{proof}

As in \cite{MR4275958}, the rest of the proof proceeds by showing that if $p_c(G)$ can take on different values, then the root vertices in $G_n$ that produce these different values in the limit can be connected by a short path, contradicting Proposition~\ref{p:sarkar}. This relies crucially on the expansion condition, which will be used in the application of the following classical result:

\begin{thm}[Menger's theorem]\label{t:menger}
In a finite graph $G=(V,E)$, for any two disjoint subsets $A,B\subseteq V$,
the minimum size of an $A$-$B$ edge cut equals the maximum number of pairwise edge-disjoint paths from $A$ to $B$.
\end{thm}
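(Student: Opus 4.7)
The plan is to derive Menger's theorem from the integer max-flow min-cut theorem applied to an auxiliary flow network. The easy direction --- that the maximum number of pairwise edge-disjoint $A$-$B$ paths is at most the minimum $A$-$B$ edge cut --- is immediate: each such path must use at least one edge of any $A$-$B$ edge cut, and edge-disjointness of the paths forces these cut edges to be distinct, so the number of paths is bounded by the size of the cut.

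For the reverse inequality, I would augment $G$ by introducing a new source $s$ and sink $t$, adding infinite-capacity edges from $s$ to each vertex of $A$ and from each vertex of $B$ to $t$, and assigning unit capacity to every original edge of $G$ (viewing each undirected unit edge as a pair of antiparallel unit-capacity arcs so that the construction fits into the standard directed flow framework). Since all finite capacities are integers, the integer max-flow min-cut theorem produces a maximum integer $s$-$t$ flow whose value $k$ equals the minimum $s$-$t$ cut. A standard flow-decomposition argument writes an integer $s$-$t$ flow of value $k$ as a sum of $k$ unit flows supported on edge-disjoint $s$-$t$ paths; stripping $s$ and $t$ and shortening walks to simple paths while retaining edge-disjointness yields $k$ pairwise edge-disjoint $A$-$B$ paths in $G$. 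Conversely, any finite minimum $s$-$t$ cut must avoid the infinite-capacity edges, so it is carried by the unit-capacity arcs and therefore corresponds to an $A$-$B$ edge cut in $G$ of the same size. Combining the two directions gives the claimed equality.

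The main technical point to watch is the passage between the directed flow network and the undirected graph $G$: when an undirected edge is replaced by two antiparallel unit arcs, one must invoke flow cancellation to ensure that in an integer optimal flow only one direction of each arc pair carries positive flow, so that the decomposed $s$-$t$ paths remain edge-disjoint when viewed in the original undirected graph. Beyond this bookkeeping, the result is completely classical, which is why the paper invokes Menger's theorem as a black box; an alternative self-contained route is a short induction on $|E(G)|$ by contracting or deleting a non-cut edge, which avoids the flow formalism entirely but produces essentially the same argument in combinatorial disguise.
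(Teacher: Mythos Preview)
Your proof outline via integer max-flow min-cut is a correct and standard derivation of Menger's theorem, and your caveat about flow cancellation on antiparallel arcs is exactly the right technical point to flag. There is nothing to compare against, however: the paper states Menger's theorem purely as a classical black box and gives no proof whatsoever --- it simply invokes the result inside the proof of Corollary~\ref{c:bdd.deg.constant.pc} (and again implicitly in Lemma~\ref{l:sprinkling}) to pass from an edge-cut lower bound to a family of edge-disjoint paths. So your proposal supplies strictly more than the paper does on this point; either the flow-based argument you sketch or the alternative induction you mention would be a perfectly acceptable way to fill in the omitted proof.
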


\begin{cor}\label{c:bdd.deg.constant.pc}
Suppose $G_n$ is an expanding sequence of finite graphs with $G_n\lwc(G,\rho)$, and such that each $G_n$ has average degree at most $d<\infty$. Then $p_c(G)$ is almost surely constant.

\begin{proof}
This proof is essentially identical to that of \cite[Theorem~1.1]{MR4275958}. The uniform degree bound is only used to bound the number of edges in the graph, so the argument can be directly adapted to when there is a uniform average degree bound. 

Suppose to the contrary that $p_c(G)$ is not almost surely constant. Then there exist $0\le a<b\le1$ such that, if we define 
	\[\mathcal{G}^1=\Big\{G:p_c(G)\le a\Big\}\,,\quad
	\mathcal{G}^2=\Big\{G:p_c(G)\ge b\Big\}\,,\]
then we will have a positive constant $p_0$ such that
	\[
	\min\Big\{ \mu(\mathcal{G}^1),\mu(\mathcal{G}^2)\Big\}\ge p_0\,.
	\]
It is clear that $\mathcal{G}^1$ and $\mathcal{G}^2$ are disjoint, and invariant under rerooting. Recall that $\GG_\star$ is a Polish space (see Definition~\ref{d:rooted.gr}), and that all Borel measures on Polish spaces are inner regular. It follows that there exist compact subsets $H^i\subseteq\mathcal{G}^i$ such that 
	\beq\label{e:choice.of.H}
	\min\Big\{ \mu(H^1),\mu(H^2)\Big\}
	\ge \frac{p_0}{2}\,.
	\eeq
Recall that $G_n$ is assumed to be an expanding sequence in the sense of Definition~\ref{d:expand}, and let $h$ be a positive constant such that
	\beq\label{e:lim.exp}
	\frac12\liminf_{n\to\infty} h_{p_0/4}(G_n) \ge h\,.
	\eeq
Recall that $G_n$ is also assumed to have average degree at most $d$, and define
	\[K=\frac{4d}{hp_0}\,.\]
Now apply Proposition~\ref{p:sarkar} with $H^1$, $H^2$, and $K$ as defined above, and let $R$ be the integer that results from the conclusion of the proposition. If we define the events
	\[
	E^i\equiv \bigg\{(G,\rho) : 
	B_{R+K}(\rho;G)
	\cong B_{R+K}(\rho^i;G^i)
	\textup{ for some } (G^i,\rho^i)\in H^i\bigg\}\,,
	\]
then $E^1$ and $E^2$ are disjoint, because Proposition~\ref{p:sarkar} guarantees in particular that
$B_R(\rho^1;G^1)\not\cong B_R(\rho^2;G^2)$
for all $(G^i,\rho^i)\in H^i$. Let $A^i$ be the set of vertices in $G_n$ such that $(G_n,v)$ belongs to event $E^i$. Since the events $E^1$ and $E^2$ are disjoint, the vertex sets $A^1$ and $A^1$ are disjoint. Since membership in $E^i$ depends only on the $(R+K)$-neighborhood of the root vertex, Lemma~\ref{l:localevent} gives
	\[
	\lim_{n\to\infty} \frac{|A^i|}{|V_n|}
	= \lim_{n\to\infty} \bL_n\Big( (G_n,U_n)\in E^i\Big)
	= \mu\Big( (G,\rho)\in E^i\Big) 
	\ge \mu(H^i)
	\stackrel{\eqref{e:choice.of.H}}{\ge} \frac{p_0}{2} >0\,.
	\]
Thus, for large enough $n$ we have $|A^i|/|V_n| \ge p_0/4$. It follows from the expansion assumption \eqref{e:lim.exp} that, again for $n$ large enough, the minimum edge cut separating $A^1$ from $A^2$ has at least $\kappa_n = hp_0|V_n|/4$ edges. It follows by Menger's theorem (Theorem~\ref{t:menger}) that there are at least $\kappa_n$ edge-disjoint paths joining $A^1$ to $A^2$. Since $G_n$ has at most $|V_n|d/2$ edges in total, at least half of the paths must have length at most
	\[\frac{|V_n|d/2}{\kappa_n/2} = 
	\frac{4d}{h p_0} = K\,.
	\]
However, even the existence of a single such path joining
$v^1\in A^1$ to $v^2\in A^2$ results in a contradiction: it implies 
that the $R$-neighborhood of $v^2$ lies inside the $(R+K)$-neighborhood of $v^1$ in $G_n$. The definition of $A^i$ implies $B_{R+K}(v^i;G_n)\cong B_{R+K}(\rho^i;G^i)$ for some $(G^i,\rho^i)\in H_i$. This contradicts the definition of $R$ and concludes the proof.
\end{proof}
\end{cor}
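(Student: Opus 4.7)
My plan is to argue by contradiction. Suppose $p_c(G)$ is not almost surely constant; then we can pick rationals $a<b$ with $\mathcal{G}^1=\{G:p_c(G)\le a\}$ and $\mathcal{G}^2=\{G:p_c(G)\ge b\}$ both of $\mu$-mass at least some $p_0>0$. Both sets are disjoint and rerooting-invariant, and since $\GG_\star$ is Polish, inner regularity of $\mu$ yields compact subsets $H^i\subseteq\mathcal{G}^i$ with $\mu(H^i)\ge p_0/2$. I fix the remaining constants in a specific order: from the expansion hypothesis applied at threshold $p_0/4$ I extract a Cheeger-type constant $h>0$ satisfying $h_{p_0/4}(G_n)\ge h$ for all large $n$; the average-degree bound $d$ is already given; I then \emph{define} an integer $K$ larger than $4d/(hp_0)$, which depends only on the pre-chosen data; only now do I feed $K$ into Proposition~\ref{p:sarkar} to produce a radius $R$.

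Next I define $A^i\subseteq V_n$ to be the set of vertices $v$ for which $B_{R+K}(v;G_n)$ is rooted-isomorphic to $B_{R+K}(\rho^i;G^i)$ for some $(G^i,\rho^i)\in H^i$. Membership in $A^i$ depends only on a bounded-radius neighborhood, so Lemma~\ref{l:localevent} says that $|A^i|/|V_n|$ tends to the $\mu$-probability of the analogous event on $(G,\rho)$, which is at least $\mu(H^i)\ge p_0/2$. For $n$ large, each $|A^i|/|V_n|\ge p_0/4$, and $A^1,A^2$ are disjoint because Proposition~\ref{p:sarkar} forbids isomorphic $R$-balls at any pair of vertices in $B_K(\rho^1;G^1)\times B_K(\rho^2;G^2)$. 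Applying the expansion bound to (the smaller of) $A^1,A^2$ then produces an $A^1$–$A^2$ edge cut of size at least $hp_0|V_n|/4$, and Menger's theorem (Theorem~\ref{t:menger}) converts this into the same number of pairwise edge-disjoint $A^1$–$A^2$ paths.

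The average-degree bound enters only in the final pigeonhole: since the total number of edges is at most $d|V_n|/2$ and the paths are edge-disjoint, summing path lengths shows that at least one such path has length at most $(d|V_n|/2)/(hp_0|V_n|/4)=2d/(hp_0)<K$. Such a path exhibits some $u_2\in A^2$ inside $B_K(u_1;G_n)$ for a $u_1\in A^1$; pulling back through the isomorphisms that witness $u_i\in A^i$ yields vertices $v_i\in B_K(\rho^i;G^i)$ whose $R$-balls are mutually isomorphic, directly contradicting the conclusion of Proposition~\ref{p:sarkar}. The main care I anticipate is locking $K$ down \emph{before} invoking Proposition~\ref{p:sarkar}, which works precisely because $K$ is assembled only from the pre-chosen constants $d$, $h$, and $p_0$; this is the sole role of the average-degree hypothesis, and is why an average bound (rather than a pointwise bound) suffices.
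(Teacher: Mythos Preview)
Your proposal is correct and follows essentially the same route as the paper's proof: the same contradiction setup, the same use of inner regularity to extract compact $H^i$, the same order of fixing $h$, then $K$, then invoking Proposition~\ref{p:sarkar} for $R$, the same local-event/Lemma~\ref{l:localevent} argument to get $|A^i|/|V_n|\ge p_0/4$, and the same expansion-plus-Menger-plus-pigeonhole contradiction. The only differences are cosmetic (you take an integer $K>4d/(hp_0)$ and use an averaging pigeonhole giving one path of length $\le 2d/(hp_0)$, whereas the paper sets $K=4d/(hp_0)$ and argues that at least half the paths have length $\le K$), and your closing remark about the role of the average-degree bound matches the paper's own observation.
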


We remark that the above argument implies the result of \cite[Theorem~1.1]{MR4275958} under the milder assumptions of Theorem~\ref{t:main}. Any local weak limit of a sequence of finite graphs is a random rooted graph whose measure is unimodular (see \cite[Definition~2.1]{MR2354165}). The space of unimodular probability measures on $\GG_\star$ is convex, and its extreme points are said to be \bemph{extremal} or \bemph{ergodic}. A unimodular measure $\mu$ on $\GG_\star$ is extremal if and only if $\mu(E)\in\{0,1\}$ for any event $E$ that is invariant under non-rooted isomorphisms (\cite[Theorem~4.7]{MR2354165}). It then follows from the above that the local weak limit of an expanding sequence of finite graphs is extremal if its root has finite expected degree:

\begin{cor}\label{c:extremal}
Under the conditions of Theorem~\ref{t:main}, $G$ is an extremal unimodular random graph.
\end{cor}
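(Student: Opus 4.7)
The plan is to verify the extremality criterion from \cite[Theorem~4.7]{MR2354165}: it suffices to show that $\mu(E)\in\{0,1\}$ for every measurable $E\subseteq\GG_\star$ that is invariant under non-rooted isomorphism. Since $\GG_\star$ already identifies rooted-isomorphic graphs, invariance under non-rooted isomorphism is equivalent to closure under rerooting, i.e.\ $(G,\rho)\in E\Rightarrow (G,\rho')\in E$ for every vertex $\rho'$ of $G$. So the goal becomes: every rerooting-invariant $E\subseteq\GG_\star$ has $\mu$-measure $0$ or $1$.

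The reduction to bounded average degree is already in hand. Given $\epsilon>0$, apply Proposition~\ref{p:prune} to replace $G_n$ by $\bar{G}_n$, which is expanding, still converges locally weakly to $(G,\rho)$, and has average degree at most $\mathbb{E}[\deg\rho]+\epsilon<\infty$. Thus we may assume throughout that the approximating graphs have uniformly bounded average degree.

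Now suppose for contradiction that some rerooting-invariant $E\subseteq\GG_\star$ has $0<\mu(E)<1$, and set $\mathcal{G}^1=E$ and $\mathcal{G}^2=\GG_\star\setminus E$. These are disjoint, rerooting-invariant, and each carries positive $\mu$-mass. The crucial observation is that the proof of Corollary~\ref{c:bdd.deg.constant.pc} never used anything special about the $p_c$-level sets: the only properties of $\mathcal{G}^1,\mathcal{G}^2$ it invoked were (a) disjointness, (b) rerooting invariance, and (c) positive $\mu$-measure. Combined with the uniform average degree bound and expansion, the proof extracts disjoint positive-density vertex sets $A^1,A^2\subseteq V_n$ whose members have different local $(R+K)$-neighborhoods, then uses the expansion condition together with Menger's theorem (Theorem~\ref{t:menger}) to produce a short edge-disjoint path between $A^1$ and $A^2$, contradicting Proposition~\ref{p:sarkar}. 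Running that argument verbatim with $\bar{G}_n$ and the above choice of $\mathcal{G}^1,\mathcal{G}^2$ yields a contradiction, so no such $E$ can exist and $\mu$ is extremal.

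In truth there is no real obstacle here — the statement is a direct corollary of the preceding proof. The only thing to verify carefully is the first paragraph above: that the rerooting-invariant $\sigma$-algebra on $\GG_\star$ coincides with the non-rooted-isomorphism-invariant $\sigma$-algebra after the rooted-isomorphism quotient, which is standard (and implicit in the criterion of \cite[Theorem~4.7]{MR2354165}).
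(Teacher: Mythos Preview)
Your proposal is correct and follows exactly the same approach as the paper: reduce to bounded average degree via Proposition~\ref{p:prune}, then observe that the proof of Corollary~\ref{c:bdd.deg.constant.pc} used only disjointness, rerooting invariance, and positive $\mu$-mass of the two sets, so it applies verbatim to any rerooting-invariant event $E$ with $0<\mu(E)<1$ and its complement. The paper states the corollary without an explicit proof, merely noting that ``it then follows from the above''; your write-up spells out precisely what is meant.
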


\begin{proof}[Proof of Theorem~\ref{t:main} part (i)]
By Proposition~\ref{p:prune}, there exists a expanding sequence of subgraphs $\bar G_n$ of $G_n$ such that each $\bar G_n$ has average degree at most $\E(\deg\rho)+1$ and $\bar G_n\lwc(G,\rho)$. Applying Corollary~\ref{c:bdd.deg.constant.pc} to $\bar G_n$ gives the result.
\end{proof}

\subsection{Subcritical percolation regime}
\label{ss:subcrit}

In this subsection we prove Theorem~\ref{t:main} part (ii), which says in short that $p_c(G_n) \ge p_c(G)$. The result is a straightforward consequence of Lemmas~\ref{l:localevent} and Corollary~\ref{c:localevent}. Let us point out that for this part of the proof we work with the original sequence $G_n$, without pruning, so we do not assume that $G_n$ has bounded average degree. The reason for this is that (\textit{a~priori}) we only know that $p_c(\bar{G}_n) \ge p_c(G_n)$, so it is not sufficient to only prove $p_c(\bar{G}_n) \ge p_c(G)$.

Recall that we let $\omega=\omega(G)$ denote an edge percolation configuration on graph $G$; we also view $\omega$ as a (random) subgraph of $G$. Let $C(v)\equiv C(v;\omega)$ denote the connected component of $\omega$ that contains $v$. For any vertex subsets $A$ and $B$, let $A\leftrightarrow_\omega B$
 indicate that $A$ and $B$ are connected by a path using only edges in $\omega$. We write simply $A\leftrightarrow B$ when $\omega$ is clear from context.

\begin{proof}[Proof of Theorem~\ref{t:main} part (ii)]
The proof of \cite[Corollary~1.2]{MR4275958}
(which is based on the proof of \cite[Theorem~1.3]{MR2773031}) applies here verbatim. To be self-contained, we also give the argument here.
Under the assumptions of Theorem~\ref{t:main}, we will show that for any $p<p_c(G)$ and any $\alpha>0$, the probability that the largest component in $\omega(G_n)$ contains more than $\alpha|V_n|$ vertices tends to zero as $n\to\infty$. Let $C_n$ denote the vertices in the largest component of $\omega(G_n)$. For any $R>0$, we have the chain of inequalities
	\begin{align}\nonumber
    &\alpha\P_{n,p}\Big(|C_n|\ge\alpha|V_n|\Big)\\
    &\qquad\le\bL_n\otimes\P_{n,p}\Big(
    	|C_n|\ge\alpha|V_n|, U_n\in C_n\Big) \nonumber\\
    &\qquad\le\bL_n\otimes\P_{n,p}\Big(|C(U_n)|\ge\alpha|V_n|\Big)
    	\nonumber\\
    &\qquad\le\bL_n\otimes\P_{n,p}
    	\Big(|C(U_n)|\ge|B_R(U_n,G_n)|\Big)
	+\bL_n\Big(|B_R(U_n,G_n)|\ge\alpha|V_n|\Big) \nonumber\\
    &\qquad\le
    \bL_n\otimes\P_{n,p}\Big(U_n\leftrightarrow\partial B_R(U_n,G_n)\Big)
    	+\bL_n\Big(|B_R(U_n,G_n)|\ge\alpha|V_n|\Big)\,.
	\label{e:chainofineq}
	\end{align}
We will consider separately the two terms on the right-hand side of \eqref{e:chainofineq}. For the first term, Corollary~\ref{c:localevent} directly implies
	\[\lim_{n\to\infty}
	\bL_n\otimes\P_{n,p}\Big(U_n\leftrightarrow\partial B_R(U_n,G_n)\Big)
	=\mu\otimes\P_p\Big(\rho\leftrightarrow\partial B_R(\rho,G)\Big)\,.
	\]
For the second term
 on the right-hand side of \eqref{e:chainofineq}, for any finite $K$ we have
	\[
	\limsup_{n\to\infty} \bL_n\Big(|B_R(U_n;G_n)|\ge\alpha|V_n|\Big)
	\le \lim_{n\to\infty} \bL_n\Big(|B_R(U_n;G_n)|\ge K\Big)
	= \mu\Big( |B_R(\rho;G)| \ge K\Big)\,,
	\]
where the last step is by Lemma~\ref{l:localevent}. 
Since $|B_R(\rho;G)|$ is almost surely finite, the right-hand side tends to zero as $K\to\infty$. The left-hand side does not depend on $K$, so we conclude that in fact
	\[\lim_{n\to\infty} \bL_n\Big(|B_R(U_n;G_n)|\ge\alpha|V_n|\Big)=0\,.\]
Substituting these limits back into \eqref{e:chainofineq} gives
	\[
	\limsup_{n\to\infty}
	\alpha\P_{n,p}\Big(|C_n|\ge\alpha|V_n|\Big)
	\le \mu\otimes\P_p\Big(\rho\leftrightarrow\partial B_R(\rho,G)\Big)\,.
	\]
Since $p<p_c(G)$, the right-hand side tends to zero as $R\to\infty$. The left-hand side does not depend on $R$, so we finally conclude that
	\[\lim_{n\to\infty}\P_{n,p}\Big(|C_n|\ge\alpha|V_n|\Big)=0\,,\]
as claimed.
\end{proof}

\subsection{Supercritical percolation regime}
\label{ss:supercrit}

In this subsection we prove Theorem~\ref{t:main} part (iii), which says in short that $p_c(G_n) \le p_c(G)$. We make use of Proposition~\ref{p:prune} to reduce to the case where the $G_n$ have uniformly bounded average degree. The rest of the proof is adapted from that of \cite[Theorem~1.3]{MR2773031}, where the idea is to first percolate with a smaller probability $p-\epsilon>p_c(G)$, then connect the resulting clusters into a linear-sized component with the remaining probability. While \cite{MR2773031} used the bounded degree assumption to obtain a strong concentration bound for the number of percolation clusters, we will use Chebychev's inequality to obtain a weaker (but sufficient) bound for our more general setting. This is based on the following lemma:

\begin{lem}\label{l:distlem}
Under the assumptions of Theorem~\ref{t:main}, we have for any fixed $R$ that
	\[
	\lim_{n\to\infty}
	\frac{1}{|V_n|^2}
	\bigg|\Big\{ (u,v) \in (V_n)^2 : 
	d(u,v) \le R\Big\}
	\bigg|=0\,.
	\]

\begin{proof} We can bound the quantity of interest by
	\begin{align*}
	r(n)&\equiv\frac{1}{|V_n|^2}
	\bigg|\Big\{ (u,v) \in (V_n)^2 : 
	d(u,v) \le R\Big\}
	\bigg| =
	\sum_{v\in V_n}
	\frac{|B_R(v;G_n)|}{|V_n|^2} \\
	&=\frac{ \E_{\bL_n} |B_R(U_n;G_n)|}{|V_n|}
	\le \frac{K}{|V_n|}
	+ \bL_n\Big(|B_R(U_n,G_n)|\ge K\Big)\,,
	\end{align*}
where the last inequality holds for any finite $K$.
It follows using Lemma~\ref{l:localevent} that
	\[
	\limsup_{n\to\infty}
	r(n) \le \mu\Big(|B_R(\rho,G)|\ge K\Big)\,.
	\]
Since $|B_R(\rho,G)|$ is finite almost surely, the right-hand side tends to zero as $K\to\infty$. The left-hand side does not depend on $K$, so we conclude $r(n)\to0$  as claimed.\end{proof}\end{lem}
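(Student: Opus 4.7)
The plan is to rewrite the counting quantity as a vertex expectation. Since the pair $(u,v)\in V_n^2$ satisfies $d(u,v)\le R$ if and only if $v\in B_R(u;G_n)$, one has $|\{(u,v):d(u,v)\le R\}|=\sum_{u\in V_n}|B_R(u;G_n)|=|V_n|\cdot\E_{\bL_n}|B_R(U_n;G_n)|$. Dividing by $|V_n|^2$, the lemma reduces to showing $\E_{\bL_n}|B_R(U_n;G_n)|=o(|V_n|)$ as $n\to\infty$.

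The main subtlety is that $|B_R(\rho;G)|$, while almost surely finite by local finiteness of $G$, need not be integrable, so one cannot directly invoke local weak convergence on the function $(H,\rho)\mapsto|B_R(\rho;H)|$. To get around this, I would truncate: for any finite $K$, split the expectation into a bounded part of size at most $K$ and a tail part, and use the trivial bound $|B_R(U_n;G_n)|\le|V_n|$ on the tail to obtain
\[
\E_{\bL_n}|B_R(U_n;G_n)| \le K+|V_n|\cdot\bL_n\bigl(|B_R(U_n;G_n)|>K\bigr).
\]

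Dividing by $|V_n|$ and sending $n\to\infty$, the first term vanishes, and the second term is the expectation of a bounded local indicator, so Lemma~\ref{l:localevent} applies and yields the limit $\mu(|B_R(\rho;G)|>K)$. Then, sending $K\to\infty$, almost-sure finiteness of $|B_R(\rho;G)|$ forces this tail probability to $0$. The left-hand side of the bound is independent of $K$, so exchanging the limits in the standard way finishes the proof. I do not expect a real obstacle beyond correctly orchestrating this two-parameter truncation.
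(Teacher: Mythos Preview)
Your proposal is correct and essentially identical to the paper's proof: both rewrite the count as $|V_n|^{-1}\E_{\bL_n}|B_R(U_n;G_n)|$, truncate at level $K$ using the trivial bound $|B_R(U_n;G_n)|\le|V_n|$ on the tail, apply Lemma~\ref{l:localevent} to the tail indicator, and then send $K\to\infty$ using almost-sure finiteness of $|B_R(\rho;G)|$.
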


The rest of the proof closely follows that of 
\cite[Theorem~1.3]{MR2773031}, which uses a sprinkling argument 
(see also
\cite{MR671140,MR2073175}). Given $p>p_c(G)$, let $p_1>p_c(G)$ and $\epsilon>0$ such that
	\[1-p=(1-p_1)(1-\epsilon)\,.\]
Let $\omega_1$ denote an edge percolation configuration with edge probability $p_1$, and let $\omega_\epsilon$ denote an edge percolation configuration with edge probability $\epsilon$. Then the union of $\omega_1$ and $\omega_\epsilon$ is equidistributed as a $p$-percolation. The first step is to show that a positive proportion of the vertices in $\omega_1$ percolate far and hence are contained in sizable components.

\begin{lem}\label{l:estimate.on.perc.one}
If $p>p_c(G)$, then there exists $\delta>0$ such that for any $R>0$,
	\[\lim_{n\to\infty}
	\P_{n,p}\bigg(
	\frac{1}{|V_n|}
	\bigg|\Big\{
	v \in G_n : v\leftrightarrow \partial B_R(v;G_n)
	\Big\}\bigg|
	\le\delta\bigg)=0\,.
	\]

\begin{proof}
The first part of the proof establishes a positive lower bound on the expectation of the quantity of interest and is essentially identical to the argument in the proof of \cite[Corollary~1.2]{MR4275958}. Given $(G,\rho)$, let $f(G,\rho)$ be the probability under a $p$-percolation that the root percolates:
	\[
	f(G,\rho)
	\equiv
	\P_p\bigg(
		\Big|C(\rho ; \omega(G))\Big|=\infty\bigg)\,,
	\]
where $C(\rho ; \omega(G))$ is the connected component of $\omega(G)$ that contains $\rho$. This is a measurable function on $\GG_\star$, since it is the decreasing limit of the local (hence measurable) functions
	\[
	f_R(G,\rho)
	\equiv
	\P_p\bigg(
		\rho \leftrightarrow \partial B_R(\rho;G)
		\bigg)\,. 
	\]
Since $p> p_c(G)$, the quantity $f(G,\rho)$ must be positive, $\mu$-almost surely. Averaging over $\mu$ gives
	\[
	\mu\otimes\P_p\Big(\rho
		\leftrightarrow \partial B(\rho,R)\Big)
	\ge
	\mu\otimes\P_p\bigg(
		\Big|C(\rho ; \omega(G))\Big|=\infty\bigg) \ge \delta>0
	\]
uniformly over all $R$. It follows by Corollary~\ref{c:localevent} that
	\[
	\lim_{n\to\infty}
	\bL_n\otimes\P_{n,p}
	\Big(U_n \leftrightarrow \partial B_R(U_n;G_n)\Big)
	= \mu\otimes\P_p\Big(\rho
		\leftrightarrow \partial B(\rho,R)\Big)
	\ge\delta\,.
	\]
Let $Y_v=Y_v(R)$ be the indicator that $v$ percolates at least distance $R$ under the $p$-percolation, 
	\[Y_v=\mathbf{1}
		\Big\{
		v \leftrightarrow \partial B_R(v;G_n)
		\Big\}\,.\]
Let $X_n=X_n(R)$ be the sum of $Y_v$ over all $v\in V_n$, so $X_n$ is the quantity of interest. If $\E_{n,p}$ denotes expectation with respect to $\P_{n,p}$, then
for all $n$ large enough we have
	\[\frac{\E_{n,p}X_n}{|V_n|}
	=\bL_n\otimes\P_{n,p}
	\Big(U_n \leftrightarrow \partial B_R(U_n;G_n)\Big)
	\ge \frac{\delta}{2}\,.\]
We next apply the second moment method to show that $X_n$ concentrates, as the method used in \cite{MR2773031} no longer applies due to potential unboundedness of the degrees. Note that $\Cov(Y_u,Y_v)\le1$ always (since $Y_u,Y_v$ are indicators).
If $u$ and $v$ lie at distance more than $2R$ in $G_n$, then $Y_u$ and $Y_v$ are clearly independent, so in this case $\Cov(Y_u,Y_v)=0$. Hence, we have that
	\[
	\frac{\Var_{n,p} X_n}{|V_n|^2}
	\le
	\frac{1}{|V_n|^2} \bigg|\Big\{ (u,v) \in (V_n)^2 : 
	d(u,v) \le 2R\Big\}
	\bigg|
	= o_n(1)\,,
	\]
by Lemma~\ref{l:distlem}. It follows by Chebyshev's inequality that
	\[
	\P_{n,p_1}\bigg(X_n\le\frac{\delta|V_n|}4\bigg)
	\to0\,.
	\]
The claim follows by redefining $\delta$.
\end{proof}
\end{lem}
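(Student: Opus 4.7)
The plan is to apply the second moment method to the random variable
$X_n = X_n(R) = \sum_{v \in V_n} Y_v$, where $Y_v = \Ind{v \leftrightarrow \partial B_R(v;G_n)}$
under $p$-percolation. I will first establish a positive linear lower bound on $\E_{n,p} X_n$, then control $\Var_{n,p} X_n = o(|V_n|^2)$ via Lemma~\ref{l:distlem}, and conclude with Chebyshev's inequality.

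For the lower bound on the mean: since $p > p_c(G)$, the function $f(G,\rho) = \P_p(|C(\rho;\omega(G))| = \infty)$ is strictly positive $\mu$-almost surely, hence $\mu \otimes \P_p(|C(\rho)| = \infty) \ge 2\delta$ for some $\delta > 0$. The events $\{\rho \leftrightarrow \partial B_R(\rho;G)\}$ are decreasing in $R$ with intersection $\{|C(\rho)| = \infty\}$, so the same bound $\ge 2\delta$ holds for each fixed $R$. Applying Corollary~\ref{c:localevent} to the local event $\{U_n \leftrightarrow \partial B_R(U_n;G_n)\}$ gives
$\E_{n,p} X_n / |V_n| = \bL_n \otimes \P_{n,p}(U_n \leftrightarrow \partial B_R(U_n;G_n)) \to \mu \otimes \P_p(\rho \leftrightarrow \partial B_R(\rho;G)) \ge 2\delta$,
so $\E_{n,p} X_n \ge \delta |V_n|$ for all $n$ large enough.

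For the variance: since $Y_v$ is determined solely by the edges inside $B_R(v;G_n)$, if $d_{G_n}(u,v) > 2R$ then $B_R(u;G_n)$ and $B_R(v;G_n)$ are edge-disjoint, so $Y_u$ and $Y_v$ are independent and $\Cov(Y_u,Y_v) = 0$. For any other pair the covariance is bounded by $1$, so
$\Var_{n,p} X_n \le |\{(u,v) \in V_n^2 : d_{G_n}(u,v) \le 2R\}|$,
which equals $o(|V_n|^2)$ by Lemma~\ref{l:distlem}. Chebyshev's inequality then yields $\P_{n,p}(X_n \le \delta |V_n|/2) \le \Var_{n,p} X_n / (\delta|V_n|/2)^2 \to 0$, and redefining $\delta$ completes the proof.

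The main challenge, and the reason the prior proof of \cite{MR2773031} does not transfer directly, is that without a uniform degree bound a single edge modification can flip $Y_v$ for many vertices $v$ at once, so Azuma-type concentration is unavailable. The resolution is to replace strong concentration with the weaker but adequate second moment method; this works precisely because $Y_u$ and $Y_v$ decorrelate once $d(u,v) > 2R$, and Lemma~\ref{l:distlem}---whose proof relies on the finite expected root degree hypothesis transported through local weak convergence---guarantees that close pairs form a $o(|V_n|^2)$ fraction of $V_n^2$.
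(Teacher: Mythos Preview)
Your proof is correct and follows essentially the same approach as the paper: establish a linear lower bound on $\E_{n,p} X_n$ via local weak convergence and Corollary~\ref{c:localevent}, bound $\Var_{n,p} X_n$ using that $Y_u,Y_v$ are independent once $d(u,v)>2R$ together with Lemma~\ref{l:distlem}, and conclude by Chebyshev. One small correction to your closing commentary: the proof of Lemma~\ref{l:distlem} uses only local finiteness of $G$ (so that $|B_R(\rho;G)|<\infty$ almost surely), not the finite expected root degree hypothesis.
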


Once we prune the $G_n$, the remainder of the proof is essentially the same as in \cite{MR2773031}, where the uniform degree bound is only used to bound the number of edges in the graph. For the sake of being self-contained, we reproduce the following lemma from the end of the proof of \cite[Theorem~1.3]{MR2773031}:

\begin{lem}\label{l:sprinkling}
Suppose $G_n$ is an expanding sequence of finite graphs, each with average degree at most $d$. Let $\omega_1,\omega_\epsilon$ be two independent edge percolation configurations on $G_n$, with edge probabilities $p_1$ and $\epsilon$ respectively. Let $X_n(R)$ be the number of vertices in $G_n$ that percolate to distance $R$ under $\omega_1$. Let $C_n$ be the largest connected component in $\omega_1\cup\omega$ and $\delta\in(0,1)$. Then there is a large enough constant $R$ such that
	\[\lim_{n\to\infty}
	\P_{n,p}\bigg(|C_n| \ge \frac{\delta|V_n|}3
	\,\bigg|\, X_n(R)>\delta|V_n|\bigg)
	=1\,.\]
The value of $R$ depends on $d$, $\delta$, $\epsilon$, and the expansion rate, but not on $n$.

\begin{proof}
We condition on $\omega_1$ and use the randomness of $\omega_\epsilon$ only. Consider the connected components of $\omega_1$ of size at least $R$, of which there are at most $m=|V_n|/R$. By the bound on $X_n(R)$, these components cover at least a $\delta$ fraction of the vertices in $G_n$. Let $A,B$ be any partition of these components into two sets, such that
	\beq\label{e:AB.size}
	\min\bigg\{|A|,|B|\bigg\} \ge \frac{\delta |V_n|}3\,.
	\eeq
Let $h$ be a positive constant such that \[\frac12\liminf_{n\rightarrow\infty}h_{\delta/3}(G_n)\ge h.\] By the same reasoning as in the proof of Corollary~\ref{c:bdd.deg.constant.pc}, $A$ and $B$ must be joined by at least 
	\[
	\lambda_n = \frac{h\delta|V_n|}3
	\]
edge-disjoint paths for sufficiently large $n$ , at least half of which must be of length at most
	\[
	\frac{|V_n|d/2}{\lambda_n/2}
	= \frac{3 d}{h\delta} \equiv L\,.
	\]
Thus, the probability that $\omega_\epsilon$ fails to have a path from $A$ to $B$ is at most 
	\[
	(1-\epsilon^L)^{\lambda_n/2}
	\le \exp\bigg\{
	-\frac{\lambda_n}{2} \epsilon^L
	\bigg\}
	\]
Let $\bm{E}$ be the event that $\omega_\epsilon$ has a path from $A$ to $B$ for every partition $(A,B)$ satisfying \eqref{e:AB.size}. Then
	\[
	1-\P(\bm{E})
	\le 2^m\exp\bigg\{
	-\frac{\lambda_n}{2} \epsilon^L
	\bigg\}
	\le \exp\bigg\{
	|V_n|\bigg(\frac1R
		- \frac{h\delta}{6} \epsilon^L
		\bigg)
	\bigg\}\,,
	\]
which tends to zero by taking $R$ a large enough constant.
On the event $\bm{E}$, the configuration $\omega=\omega_1\cup\omega_\epsilon$ contains a component that occupies at least a $\delta/3$ fraction of the vertices in $G_n$, as desired.
\end{proof}
\end{lem}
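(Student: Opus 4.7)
The plan is a classical sprinkling argument: condition on any realization of $\omega_1$ with $X_n(R) > \delta|V_n|$, and show that $\omega_\epsilon$ alone suffices to glue a linear fraction of the ``large'' $\omega_1$-components into a single component with probability tending to $1$. Any component of $\omega_1$ containing a vertex that percolates to distance $R$ must have at least $R+1$ vertices; enumerate the distinct such large components as $K_1,\ldots,K_m$. Since they are disjoint and cover at least $\delta|V_n|$ vertices, both $\sum_i |K_i|\ge\delta|V_n|$ and $m\le |V_n|/R$ hold.

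The contrapositive is the cleanest entry point. If $|C_n|<\delta|V_n|/3$, then grouping the $K_i$ by the $\omega_1\cup\omega_\epsilon$-component containing them yields equivalence classes whose total vertex mass is strictly less than $\delta|V_n|/3$, so a greedy first-fit allocation produces a $2$-coloring $(A,B)$ of $\{K_1,\ldots,K_m\}$ whose (vertex) masses both exceed $\delta|V_n|/3$ and that crucially admits no $\omega_\epsilon$-path between $A$ and $B$. It therefore suffices to prove that, uniformly over all $2^m\le 2^{|V_n|/R}$ such balanced partitions $(A,B)$, the event ``$\omega_\epsilon$ has no $A$-$B$ path'' has probability exponentially small in $|V_n|$, with a rate large enough to absorb the $2^m$ union-bound cost.

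For any fixed balanced partition, view $A$ and $B$ as vertex subsets of $G_n$, each of size at least $\delta|V_n|/3$. Any $A$-$B$ vertex cut has the smaller side of relative size in $[\delta/3,1/2]$, so the expansion hypothesis gives an $h>0$ such that for $n$ large the minimum $A$-$B$ edge cut has size at least $\lambda_n \equiv h\delta|V_n|/3$; Menger's theorem then produces $\lambda_n$ edge-disjoint $A$-$B$ paths in $G_n$. Since $G_n$ has at most $d|V_n|/2$ edges, at least $\lambda_n/2$ of these paths have length at most $L := 3d/(h\delta)$, and edge-disjointness yields $\P_{n,p}(\omega_\epsilon \text{ misses every short } A\text{-}B \text{ path}) \le (1-\epsilon^L)^{\lambda_n/2} \le \exp(-\lambda_n\epsilon^L/2)$. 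Multiplying by the $\exp(|V_n|\log 2/R)$ union-bound cost gives a total bound of $\exp(|V_n|[\log 2/R - h\delta\epsilon^L/6])$, which tends to $0$ once $R>6\log 2/(h\delta\epsilon^L)$.

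The main quantitative obstacle is exactly this balancing act: the sprinkling gain scales like $\exp(-h\delta|V_n|\epsilon^L/6)$ with a constant that shrinks doubly (through $\lambda_n$ and through $\epsilon^L$ where $L \sim 1/\delta$), while the union-bound cost $2^{|V_n|/R}$ shrinks in $R$ only linearly in the exponent. The whole argument hinges on choosing $R$ large in terms of $d,\delta,\epsilon,h$ (but uniformly in $n$) so that the sprinkling exponent strictly dominates.
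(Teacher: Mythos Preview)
Your proof is correct and follows essentially the same sprinkling argument as the paper: condition on $\omega_1$, bound the number of large $\omega_1$-components by $|V_n|/R$, use expansion plus Menger plus the edge count to find $\lambda_n/2$ edge-disjoint $A$--$B$ paths of length at most $L=3d/(h\delta)$, and beat the $2^m$ union bound by choosing $R$ large. Your explicit contrapositive via greedy first-fit on the $\omega_1\cup\omega_\epsilon$-equivalence classes is a nice clarification of a step the paper leaves as a one-line assertion (``On the event $\bm{E}$, the configuration \ldots\ contains a component that occupies at least a $\delta/3$ fraction''), but the overall strategy is the same.
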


\begin{proof}[Proof of Theorem~\ref{t:main} part (iii)]
By Proposition~\ref{p:prune}, there exists a expanding sequence of subgraphs $\bar G_n$ of $G_n$ such that each $\bar G_n$ has average degree at most $\E(\deg\rho)+1$ and $\bar G_n\lwc(G,\rho)$. Since $\bar G_n$ is a subgraph of $G_n$, the probability that $\omega(G_n)$ contains a component of linear size is at least the probability that $\omega(\bar G_n)$ contains a component of linear size, i.e. $p_c(\bar{G}_n) \ge p_c(G_n)$. Thus, it suffices to prove the statement for $\bar G_n$.

Let $X_n(R)$ be the number of vertices in $\bar G_n$ that percolate to distance $R$ under $\omega_1$ and $C_n$ be the largest connected component in $\omega(\bar G_n)$. We have that \[\P_{n,p}\bigg(|C_n|\ge\frac{\delta|V_n|}3\bigg)\ge\P_{n,p_1}(X_n(R)>\delta|V_n|)\times\P_{n,p}\bigg(|C_n|\ge\frac{\delta|V_n|}3\,\bigg|\, X_n(R)>\delta|V_n|\bigg)\] for any $R,\delta>0$. Taking $\delta$ to be the constant that results from applying Lemma~\ref{l:estimate.on.perc.one} to $\omega_1(\bar G_n)$ and $R$ to be the constant that results from applying Lemma~\ref{l:sprinkling} to $\bar G_n$ and $\delta$, we have that \[\lim_{n\rightarrow\infty}\P_{n,p_1}(X_n(R)>\delta|V_n|)=\lim_{n\rightarrow\infty}\P_{n,p}\bigg(|C_n|\ge\frac{\delta|V_n|}3\,\bigg|\, X_n(R)>\delta|V_n|\bigg)=1.\] It follows that \[\lim_{n\rightarrow\infty}\P_{n,p}\bigg(|C_n|\ge\frac{\delta|V_n|}3\bigg)=1,\] so we may take $\alpha=\delta/3$.
\end{proof}

{\raggedright
\bibliographystyle{alphaabbr}
\bibliography{percrefs}
}

\end{document}